\numberwithin{equation}{section}
\theoremstyle{plain}
\newtheorem{thm}{Theorem}[section]
\newtheorem{proposition}[thm]{Proposition}
\newtheorem{lem}[thm]{Lemma}
\newtheorem{corollary}[thm]{Corollary}
\theoremstyle{definition}
\theoremstyle{remark}
\newtheorem*{rmk}{Remark}
\newcommand{\defi}{\operatorname{def}}
\newcommand{\D}{\operatorname{D}}
\newcommand{\M}{\operatorname{M}}
\newcommand{\m}{\operatorname{m}}
\newcommand{\Ms}{\operatorname{M*}}
\newcommand{\ms}{\operatorname{m*}}
\newcommand{\Ls}{\mathcal{L}^*}
\newcommand{\floor}[1]{\lfloor #1 \rfloor}
\begin{document}

\title{Deficiency in Signed Graphs \footnote{This paper originates from a doctoral thesis written under the supervision of Thomas Zaslavsky.}}
\author{Amelia R.W. Mattern}
\affil{Binghamton University, Binghamton, New York, U.S.A.}
\maketitle

\begin{abstract}
We introduce the concept of deficiency in signed graphs. The deficiency of a coloration is the number of unused colors. We classify the deficiency of 2-chromatic graphs. There are four decision problems about the minimum and maximum deficiency of a 3-chromatic signed graph. We answer two of them with a polynomial-time algorithm for deciding the maximum deficiency of a 3-chromatic signed graph.
\end{abstract}

\section{Introduction}
In this paper we explore the novel concept of deficiency. Deficiency is a characteristic unique to signed graphs from which arises a multitude of interesting questions, many of which are yet unanswered.

A \emph{signed graph} is a graph in which every edge has an associated sign. We write a signed graph $\Sigma$ as the triple $(V, E, \sigma)$ where $V$ is the vertex set, $E$ is the edge set, and $\sigma: E \to \{+,-\}$ is the \emph{signature}. Our graphs are \emph{signed simple graphs}: no loops and no multiple edges. 

We define signed-graph coloring as in \cite{zaslavsky}, and chromatic number as in \cite{macajova}. A \emph{proper coloration} of a signed graph $\Sigma$ is a function, $\kappa : V \to \{\pm1, \pm 2, \ldots, \allowbreak \pm k, 0\},$ such that for any edge $e_{ab} \in E$, $\kappa(a) \neq \sigma(e) \kappa(b).$ The \emph{chromatic number} of $\Sigma$, written $\chi(\Sigma)$, is the size of the smallest set of colors which can be used to properly color $\Sigma.$ A graph with chromatic number $k$ is called \emph{k-chromatic}. A coloration is \emph{minimal} if it is proper and uses a set of colors of size $\chi(\Sigma).$ If $\chi = 2k,$ then a minimal \emph{color set} is $\{\pm1, \pm2, \ldots, \pm k\},$ and if $\chi = 2k+1,$ then a minimal color set is $\{\pm1, \pm2, \ldots, \pm k, 0\}.$ If $\chi = 2k+1$, there must be at least one vertex colored 0 in every minimal coloration. The \emph{deficiency} of a coloration, $\defi(\kappa)$, is the number of unused colors from the color set of $\kappa.$ The \emph{deficiency set}, $\D(\kappa)$, is the set of unused colors. 

In depictions of signed graphs we use solid lines for positive edges and dashed lines for negative edges. 

\begin{figure}[H]
\centering
\begin{tikzpicture}[
thick,
       acteur/.style={
         circle,
         fill=black,
         thick,
         inner sep=2pt,
         minimum size=0.2cm
       }, scale=.7
    ] 
\tikzset{every loop/.style={min distance=12mm,looseness=12}}
\node (A) at (0,0) [acteur, label = below: 1]{};
\node (B) at (2,0) [acteur, label = below: $-1$]{};
\node (C) at (1,1.7) [acteur, label = above: 0]{};

\node (a) at (4,0) [acteur, label = below: 1]{};
\node (b) at (6,0) [acteur, label = below: 0]{};
\node (c) at (5,1.7) [acteur, label = above: 1]{};

\draw[thick] (A) to (B);
\draw[thick, dashed] (A) -- (C); 
\draw[thick, dashed] (B) -- (C); 

\draw[thick] (a) to (b); 
\draw[thick, dashed] (a) to (c);
\draw[thick, dashed] (b) to (c); 
\end{tikzpicture}
\caption[A 3-chromatic graph colored in two ways]{A 3-chromatic graph colored in two ways.}
\label{deficiency example}
\end{figure}
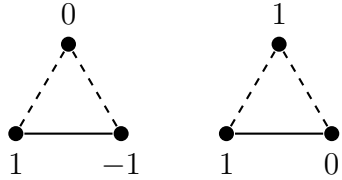

In Figure \ref{deficiency example} the coloration on the left has deficiency 0 while the coloration on the right has deficiency 1. The deficiency set of the coloration on the right is $\{-1\}.$ 

The \emph{maximum deficiency} of a graph, $\M(\Sigma) $, is $\max\{\defi(\kappa) \mid \kappa$ is a minimal proper coloration of $\Sigma \}.$ The \emph{minimum deficiency}, $\m(\Sigma)$, is $\min\{\defi(\kappa) \mid \kappa$ is a minimal proper coloration of $\Sigma\}.$ The \emph{deficiency range} of a graph, $\mathcal{L}(\Sigma)$, is $\{\defi(\kappa) \mid \kappa$ s a minimal proper coloration of $\Sigma\}.$ 

The concept of deficiency arose when considering the chromatic number of joins of signed graphs.

\begin{thm}[\cite{mattern}]
Let $\Sigma_1$ and $\Sigma_2$ be signed graphs with maximum deficiencies $M_1$ and $M_2,$ respectively. Assume that $M_1 \geq M_2.$ Then, with one exception, 
$$\chi(\Sigma_1 \vee_+ \Sigma_2) = \chi(\Sigma_1 \vee_- \Sigma_2) = \max\{\chi_1 + \chi_2 - M_1 - M_2, \chi_1\}.$$

Exception: If  $\Sigma_1$ and $\Sigma_2$ both have even chromatic number, exactly one of $M_1$ and $M_2$ is odd, and both $\Sigma_1$ and $\Sigma_2$ are exceptional graphs, then 
$$\chi(\Sigma_1 \vee_+ \Sigma_2) = \chi(\Sigma_1 \vee_- \Sigma_2) = \max\{\chi_1 + \chi_2 - M_1 - M_2 +1, \chi_1\}.$$
\end{thm}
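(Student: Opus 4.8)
The plan is to reduce both joins to a single packing problem about disjoint color sets, solve that problem using the structure of maximum-deficiency colorations, and isolate the exceptional case as a parity obstruction.

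First I would dispose of the two signs at once. In $\Sigma_1 \vee_+ \Sigma_2$ every cross edge is positive, so a coloration is proper exactly when the color sets used on $\Sigma_1$ and on $\Sigma_2$ are disjoint; in $\Sigma_1 \vee_- \Sigma_2$ every cross edge is negative, so properness instead forces the set used on $\Sigma_1$ to be disjoint from the \emph{negation} of the set used on $\Sigma_2$. Negating every color on $\Sigma_2$ is a signed permutation of the color set, so it carries proper colorations of $\Sigma_2$ to proper colorations and fixes the ambient color set $\{\pm1,\dots,\pm k\}$ (or $\{\pm1,\dots,\pm k,0\}$). This gives a bijection between proper colorations of the two joins that use a prescribed color set, hence $\chi(\Sigma_1 \vee_+ \Sigma_2) = \chi(\Sigma_1 \vee_- \Sigma_2)$, and it suffices to treat the positive join: I must find the smallest structured color set $S$ admitting proper colorations of $\Sigma_1$ and $\Sigma_2$ with disjoint images.

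Two facts from deficiency theory drive the count. For a coloration, record its number $p$ of fully used pairs $\{c,-c\}$, its number $u$ of used colors whose negative is unused, and whether $0$ is used. Leaving a whole pair unused would let one recolor inside a strictly smaller structured set, so a minimal coloration has no unused pair; hence its deficiency equals its unpaired count $u$, which shows $M_i$ is the largest such $u$ over minimal colorations and that the fewest colors in any proper coloration of $\Sigma_i$ is $\chi_i - M_i$ (I would check that minimizing over all colorations agrees with minimizing over minimal ones by relabeling into a size-$\chi_i$ set). The same no-unused-pair fact yields $M_i \le \lfloor \chi_i/2 \rfloor$, so $\chi_1 \ge 2M_1 \ge M_1 + M_2$; this is exactly what makes the single term $\chi_1$, rather than $\max\{\chi_1,\chi_2\}$, correct in the formula. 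Placing both colorations inside $S$, unpaired colors of $\Sigma_1$ and of $\Sigma_2$ may be matched two to a slot, but two unpaired colors of the \emph{same} graph cannot (a signed permutation preserves absolute-value classes), and $0$ can serve at most one graph. A short count then gives $|S| = s_1 + s_2 + |u_1-u_2|$ with $s_i \ge \chi_i - M_i$; taking $s_i = \chi_i - M_i$, the optimum is $\chi_1+\chi_2-M_1-M_2$ precisely when $u_1$ and $u_2$ can be made of equal parity. Since at the minimum (with $\chi_i$ even and $0$ unavailable) the parity of $u_i$ is that of $M_i$, parity-matching fails only in the listed configuration: both $\chi_i$ even and exactly one $M_i$ odd.

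The exceptional configuration is where the real work lies, and I expect it to be the main obstacle. The parity of $u_i$ can be flipped only by recoloring $\Sigma_i$ with $\chi_i-M_i$ colors one of which is $0$, which changes $u_i$ by one; so the waste $|u_1-u_2|$ can be driven to $0$ as soon as either graph admits such a $0$-using minimum coloration. I would take an \emph{exceptional} graph to be one admitting no minimum coloration using $0$, and then argue: if at least one of $\Sigma_1,\Sigma_2$ is non-exceptional the parity is correctable and $|S| = \chi_1+\chi_2-M_1-M_2$; if both are exceptional the obstruction is permanent, every packing wastes at least one color, and $|S| = \chi_1+\chi_2-M_1-M_2+1$. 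Matching each lower bound with an explicit disjoint-image construction, and separately checking the degenerate regime where $\chi_1$ is the binding term, completes the proof. The delicate point throughout is verifying that the recoloring using $0$ is available exactly for non-exceptional graphs and genuinely fixes the parity, since that is what pins the $+1$ to the precise hypotheses of the exception.
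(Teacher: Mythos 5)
First, a point of comparison that matters here: the paper does not prove this theorem. It is imported verbatim from reference \cite{mattern} (``The chromatic number of joins of signed graphs,'' listed as in preparation) and is quoted only as motivation for introducing deficiency. So there is no proof in this paper to measure your attempt against, and in particular the term ``exceptional graph'' is never defined anywhere in this document; it belongs to the cited work. Your decision to reverse-engineer a definition (``admits no minimum coloration using $0$'') is a reasonable guess, but it is a guess, and the truth of the exceptional clause depends entirely on whether your definition coincides with the one in \cite{mattern}. That alone prevents your proposal from being a proof of the stated theorem.

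Judged on its own terms, your outline has the right skeleton --- negating the colors on $\Sigma_2$ to identify $\chi(\Sigma_1 \vee_- \Sigma_2)$ with $\chi(\Sigma_1 \vee_+ \Sigma_2)$ is correct and clean, as is the reduction to packing two colorations with disjoint images into one structured color set, and the observation that a minimal coloration leaves no full pair unused (hence $M_i \le \lfloor \chi_i/2 \rfloor$). The genuine gap is in the optimization you then wave at. The colorations of $\Sigma_1$ and $\Sigma_2$ that realize the join's chromatic number need not be minimal colorations of the factors, so their unpaired counts $u_i$ are not controlled by $M_i$ in the way your sentence ``at the minimum the parity of $u_i$ is that of $M_i$'' suggests; already for $\chi_1=\chi_2=2$, $M_1=1$, $M_2=0$, the optimal packing colors $\Sigma_2$ from $\{0,-1\}$ with $u_2=1\neq M_2$. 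Relatedly, your claim that the fewest colors used in \emph{any} proper coloration is $\chi_i-M_i$ needs an argument: relabeling an arbitrary used-color set into a structured set of size $\chi_i$ is obstructed because a nonzero color class can be sent to $0$ only if it is stable in all of $\Sigma_i$, and two same-graph colors can be sent to a pair $\{d,-d\}$ only if no negative edge joins them. These are exactly the constraints that produce the $+1$ in the exception, and your proposal explicitly defers that analysis (``where the real work lies''). So what you have is a plausible plan whose easy half is right and whose hard half --- pinning the parity obstruction to the precise hypotheses ``both $\chi_i$ even, exactly one $M_i$ odd, both graphs exceptional'' --- is not carried out.
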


When considering the broad question of possible deficiencies of a signed graph, it makes sense to first focus on the maximum and minimum deficiencies. The simplest difficult case is that of 3-chromatic signed graphs. There are four decision problems about the maximum and minimum deficiencies of a 3-chromatic signed graph, where deficiency can only be 0 or 1. 

\begin{enumerate}
\item Is the minimum deficiency 0?
\item Is the maximum deficiency 1?
\item Is the minimum deficiency 1?
\item Is the maximum deficiency 0?
\end{enumerate}

Questions 1 and 2 are clearly in class NP.  Questions 3 and 4, on the other hand, are neither obviously in nor not in class NP. Furthermore, if the answer to either question 3 or 4 is "yes," then questions 1 and 2 are also solved. The main result of this paper is to show that questions 2 and 4 are both in class P by providing a polynomial-time algorithm for deciding the maximum deficiency of a 3-chromatic graph.

\section{Introductory Results}
\begin{thm}
\label{2-chromatic}
Let $\Sigma$ be a 2-chromatic signed graph. The deficiencies of $\Sigma$ can be classified as follows:

Case 1: $\M(\Sigma) = 1 = \m(\Sigma)$ if and only if $\Sigma$ is connected and all negative.

Case 2: $\M(\Sigma) = 0 = \m(\Sigma)$ if and only if $\Sigma$ contains a positive edge.

Case 3: $\M(\Sigma) = 1$ and $\m(\Sigma) = 0$ if and only if $\Sigma$ is disconnected and all negative.
\end{thm}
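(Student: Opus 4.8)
The plan is to reduce everything to the behavior of proper colorations with the two-element palette $\{+1,-1\}$ and then run a short case analysis on two binary features of $\Sigma$: whether it contains a positive edge, and whether it is connected. First I would record the local rule for a minimal coloration $\kappa:V\to\{+1,-1\}$: since the only available colors are $+1$ and $-1$, a positive edge $e_{ab}$ forces $\kappa(a)\neq\kappa(b)$, while a negative edge forces $\kappa(a)=\kappa(b)$ (because $\kappa(a)\neq-\kappa(b)$ with $\kappa(a),\kappa(b)\in\{\pm1\}$ is equivalent to $\kappa(a)=\kappa(b)$). I would also note that the palette has size $2$, so $\defi(\kappa)\in\{0,1\}$, with $\defi(\kappa)=1$ exactly when $\kappa$ is monochromatic and $\defi(\kappa)=0$ exactly when both colors appear.

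Next I would prove the pivotal dichotomy: a monochromatic coloration is proper if and only if every edge of $\Sigma$ is negative. One direction is immediate, since a constant coloration satisfies all the equal-endpoint constraints of negative edges but violates the distinct-endpoint constraint of any positive edge; the converse is its contrapositive. This single fact controls $\M(\Sigma)$: a deficiency-$1$ coloration exists (so $\M(\Sigma)=1$) if and only if $\Sigma$ is all negative, and $\M(\Sigma)=0$ otherwise.

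With these tools the three cases fall out by analyzing connectivity in the all-negative situation. If $\Sigma$ contains a positive edge, that edge forces both colors to appear in every proper coloration, so $\defi(\kappa)=0$ always and $\M=\m=0$ (Case 2). If $\Sigma$ is all negative, the equality constraints force each connected component to be monochromatic; hence when $\Sigma$ is connected every proper coloration is constant, giving $\M=\m=1$ (Case 1), while when $\Sigma$ is disconnected one may color the components so that both colors appear (giving $\m=0$) or so that only one color appears (giving $\M=1$), yielding $\M=1$, $\m=0$ (Case 3). For each case I would verify both implications of the biconditional by contraposition, using that the three hypotheses are mutually exclusive and exhaust all $2$-chromatic $\Sigma$. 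I would also confirm consistency with $\chi(\Sigma)=2$ by noting that any graph with an edge cannot use the size-$1$ palette $\{0\}$, and that an all-negative graph with an edge is properly colorable by $\{+1,-1\}$.

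The step I expect to require the most care is not any single deduction but the bookkeeping around the definition of a minimal coloration: the point that such a coloration is permitted to use the full size-$2$ palette $\{+1,-1\}$ even when only one color actually appears on the vertices, so that a monochromatic proper coloration legitimately counts as a minimal coloration of deficiency $1$ rather than being mistaken for a $1$-chromatic coloration. Getting this interpretation straight is what makes Cases 1 and 3 meaningful.
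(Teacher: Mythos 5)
Your proof is correct. The paper explicitly leaves the proof of Theorem \ref{2-chromatic} to the reader, so there is no argument to compare against; your reduction to the two observations that (i) on the palette $\{+1,-1\}$ negative edges force equal colors and positive edges force distinct colors, and (ii) a monochromatic coloration is proper exactly when $\Sigma$ is all negative, followed by the connectivity case split and the remark that the three hypotheses are mutually exclusive and exhaustive (which converts the three forward implications into biconditionals), is precisely the intended argument, and your closing point about a monochromatic proper coloration still counting as a minimal coloration over the size-$2$ palette $\{+1,-1\}$ rather than witnessing $\chi=1$ (whose palette would be $\{0\}$) is the one genuine subtlety, handled correctly.
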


We leave the proof of Theorem \ref{2-chromatic} to the reader. 

A subset of the vertices, $A \subseteq V$, is \emph{stable} if the subgraph induced by $A$ contains no edges.

\begin{proposition}
\label{bipartite and stable}
Let $\Sigma$ be a 3-chromatic signed graph. Then $\M(\Sigma) = 1$ if and only if $\Sigma^+$ is bipartite and there exists a bipartition of $V(\Sigma^+)$ with both parts stable in $\Sigma^+$ and one part stable in $\Sigma$.
\end{proposition}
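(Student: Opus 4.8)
The plan is to reduce the statement about $\M(\Sigma)$ to the existence of a concrete two-colour coloration and then read off the structural conditions edge by edge. Since $\Sigma$ is 3-chromatic we have $\chi(\Sigma) = 3 = 2\cdot 1 + 1$, so the minimal colour set is $\{-1, 0, +1\}$ and the deficiency of any minimal coloration lies in $\{0,1\}$. Thus $\M(\Sigma) = 1$ exactly when $\Sigma$ admits a minimal proper coloration $\kappa$ using only two of these three colours. Because $\chi$ is odd, every minimal coloration must use the colour $0$, so the two colours used are either $\{0, +1\}$ or $\{0, -1\}$. The map $\kappa \mapsto -\kappa$ sends proper colorations to proper colorations (the condition $\kappa(a) \neq \sigma(e)\kappa(b)$ is invariant under global negation) and preserves deficiency, so without loss of generality I may assume $\kappa$ uses exactly the colours $\{0, +1\}$.

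First I would fix such a coloration and set $A = \kappa^{-1}(0)$ and $B = \kappa^{-1}(+1)$, giving a partition $V = A \sqcup B$, and then translate the properness condition on each edge type. For a positive edge the requirement is $\kappa(a) \neq \kappa(b)$, which fails for two vertices of the same colour; hence no positive edge lies inside $A$ or inside $B$, so $A$ and $B$ are stable in $\Sigma^+$ and every positive edge runs between them---that is, $(A,B)$ is a bipartition witnessing that $\Sigma^+$ is bipartite. For a negative edge the requirement is $\kappa(a) \neq -\kappa(b)$; among the colours $\{0,+1\}$ the only way this can fail is when both endpoints are coloured $0$ (since $0 = -0$, while $+1 \neq -1$, $0 \neq -1$, and $+1 \neq 0$). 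Hence the sole additional constraint is that no negative edge lies inside $A$. Combined with the absence of positive edges inside $A$, this says exactly that $A$ is stable in $\Sigma$. This yields the forward direction and, by reversing each implication, simultaneously shows that any bipartition $(A,B)$ with both parts stable in $\Sigma^+$ and $A$ stable in $\Sigma$ produces a proper coloration via $\kappa \equiv 0$ on $A$ and $\kappa \equiv +1$ on $B$.

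It then remains to check that this $\kappa$ is genuinely a minimal coloration of deficiency $1$, i.e. that both colours $0$ and $+1$ are actually used. Here I would invoke 3-chromaticity: if $A = \emptyset$ then $\Sigma$ has no positive edge and is all-negative, whence it is at most $2$-chromatic, a contradiction; and if $B = \emptyset$ then $\Sigma$ is edgeless and hence $1$-chromatic, again a contradiction. Therefore $A$ and $B$ are both nonempty, $\kappa$ uses precisely $\{0,+1\}$, the colour $0$ appears (so $\kappa$ is minimal), and $\D(\kappa) = \{-1\}$, giving $\defi(\kappa) = 1$.

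The routine parts are the edge-by-edge case analysis and the two contradictions above; the only point requiring care is the opening reduction, where I must justify that deficiency $1$ forces exactly the colour set $\{0,+1\}$ (up to the global sign symmetry) and that such a two-colour coloration still counts as minimal precisely because it uses $0$. Getting this bookkeeping right is what makes the equivalence clean, and once it is in place the structural translation is direct.
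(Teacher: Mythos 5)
Your proposal is correct and follows essentially the same route as the paper: identify a deficiency-$1$ coloration with a proper $2$-coloring by $\{0,+1\}$ (the paper also uses the colors $0$ and $1$), read off bipartiteness of $\Sigma^+$ from the positive edges and stability of the $0$-class from the negative edges, and reverse the construction for the converse. Your version simply spells out details the paper leaves implicit (the $\kappa\mapsto-\kappa$ symmetry, the edge-by-edge properness check, and the nonemptiness of both color classes).
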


\begin{proof}
Let $\Sigma$ be a 3-chromatic signed graph. Then there must exist at least one positive edge, meaning $\Sigma^+ \neq \emptyset.$

$(\Rightarrow)$ Suppose $\M(\Sigma) = 1.$ Then there exists a coloration of $\Sigma$ using only the colors 0 and 1. Thus, exactly two colors are used on $\Sigma^+$, meaning $\Sigma^+$ is bipartite. Furthermore, one part is colored 0, so it must be stable in $\Sigma.$ 

$(\Leftarrow)$ Suppose $\Sigma^+$ is bipartite and there exists a bipartition of $V(\Sigma^+)$ with both parts stable in $\Sigma^+$ and one part stable in $\Sigma$. Then color the part that is stable in $\Sigma$ with the color 0 and all other vertices in $\Sigma$ with the color 1. This coloration is proper since the 0-color set is stable and the 1-color set induces an all-negative subgraph. Thus $\M(\Sigma) =1.$
\end{proof}

\section{Maximum Deficiency Algorithm}
\label{s4.1}
A \emph{directed graph} is a pair $G = (V,A)$ where $V$ is the vertex set and $A$ is the multi-set of \emph{directed edges} consisting of ordered pairs of vertices. A \emph{cycle} is a directed path that starts and ends at the same vertex. The \emph{outdegree} of a vertex, the \emph{indegree} of a vertex, the minimum and maximum outdegree, and the minimum and maximum indegree are written $\delta^+(v),$ $\delta^-(v),$ $\delta^+(G),$ $\Delta^+(G)$, $\delta^-(G)$ and $\Delta^-(G)$, respectively.

 \begin{lem}
 If $G$ is a directed graph with $\delta^+(G) \geq 1$, then $G$ contains a cycle.
 \label{digraph cycles}
 \end{lem}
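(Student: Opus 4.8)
The plan is to prove that a directed graph with minimum out-degree at least 1 contains a cycle. The standard approach is to construct a walk that never stops and argue that, in a finite graph, such a walk must eventually repeat a vertex, yielding a cycle.

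\textbf{The construction.} First I would pick any starting vertex $v_0$. Since $\delta^+(G) \geq 1$, every vertex has at least one outgoing edge, so from $v_0$ I can follow an out-edge to some vertex $v_1$, then from $v_1$ follow an out-edge to $v_2$, and so on. This process never gets stuck: at each vertex $v_i$ the hypothesis $\delta^+(v_i) \geq 1$ guarantees an out-edge to continue along. Hence I can generate an arbitrarily long directed walk $v_0, v_1, v_2, \ldots$.

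\textbf{Finding the cycle.} Because $V$ is finite (say $|V| = n$), the first $n+1$ vertices $v_0, v_1, \ldots, v_n$ of this walk cannot all be distinct, so by the pigeonhole principle there exist indices $i < j$ with $v_i = v_j$. The portion of the walk from $v_i$ to $v_j$, namely $v_i, v_{i+1}, \ldots, v_j = v_i$, is a directed path that starts and ends at the same vertex, which is exactly a cycle in the sense defined above. This completes the argument.

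\textbf{Main obstacle.} There is no serious obstacle here; the result is elementary. The only point requiring a little care is the definition of ``cycle'' in use: the paper defines a cycle merely as a directed path that starts and ends at the same vertex, so I should make sure the repeated-vertex segment I extract genuinely qualifies. In particular, since $i$ and $j$ are chosen as the first repetition (or simply any repetition within the first $n+1$ steps), the closed subwalk is nonempty and its edges follow the orientation, so it is a legitimate directed cycle. If one wanted the cycle to be \emph{simple} (no repeated intermediate vertices) one would take $j$ to be minimal such that $v_j$ equals some earlier vertex, but the stated definition does not demand this, so the pigeonhole argument suffices as is.
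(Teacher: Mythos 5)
Your proof is correct: the non-terminating walk plus pigeonhole argument is the standard proof of this fact, and your remark about choosing the first repetition to get a genuine (simple) closed path addresses the only subtlety. The paper itself states this lemma without proof, treating it as a known elementary result, so there is nothing to compare against beyond noting that your argument is the expected one.
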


Before presenting the maximum deficiency algorithm, we rephrase Proposition \ref{bipartite and stable} to give an alternate way to think about maximum deficiency of a 3-chromatic signed graph. A \emph{vertex cover} of $F$ is a set of vertices of $\Sigma$ such that every edge in $F$ is incident with at least one vertex in the set. For $\alpha \in \{+,-\}$ we use the following two notations to represent specific subsets of the edge and vertex sets: $E^\alpha = \{e \in E \mid \sigma(e) = \alpha\}$ and $V^\alpha = \{v \in V \mid \text{there exists an edge of sign $\alpha$ incident with } v\}.$ 

\begin{lem}
For a 3-chromatic signed graph $\Sigma$, $\M(\Sigma) = 1$ if and only if there exists a vertex cover of $E^+$ that is stable in $\Sigma$. 
\label{vertex cover}
\end{lem}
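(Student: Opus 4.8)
The plan is to reduce Lemma~\ref{vertex cover} to Proposition~\ref{bipartite and stable}, which already characterizes $\M(\Sigma) = 1$ in terms of a bipartition of $V(\Sigma^+)$ with both parts stable in $\Sigma^+$ and one part stable in $\Sigma$. The key observation is that ``a bipartition of $V(\Sigma^+)$ with both parts stable in $\Sigma^+$'' is exactly the same data as a proper $2$-coloring of the positive subgraph, i.e. it requires $\Sigma^+$ to be bipartite; and in such a bipartition, saying one part $S$ is stable in $\Sigma$ is the same as saying $S$ is a stable vertex cover of $E^+$, since a part being stable in $\Sigma^+$ while covering all positive edges is automatic once we know the other part is also stable in $\Sigma^+$. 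So the entire content is to translate between the two formulations.

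First I would prove the forward direction ($\M(\Sigma)=1 \Rightarrow$ stable vertex cover exists). Applying Proposition~\ref{bipartite and stable}, I obtain a bipartition $V(\Sigma^+) = X \cup Y$ with $X$ and $Y$ both stable in $\Sigma^+$ and, say, $X$ stable in $\Sigma$. I claim $X$ is a vertex cover of $E^+$: take any positive edge $e_{ab} \in E^+$; both endpoints lie in $V(\Sigma^+)$, and since $Y$ is stable in $\Sigma^+$ the edge cannot have both endpoints in $Y$, so at least one endpoint lies in $X$. Hence $X$ covers $E^+$, and $X$ is stable in $\Sigma$ by hypothesis, giving the desired stable vertex cover.

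For the reverse direction, suppose $S$ is a vertex cover of $E^+$ that is stable in $\Sigma$. I want to produce the bipartition required by Proposition~\ref{bipartite and stable}. The natural choice is to set $X = S \cap V(\Sigma^+)$ and $Y = V(\Sigma^+) \setminus X$. Since $S$ covers $E^+$, every positive edge has an endpoint in $S$, hence in $X$, so $Y$ contains no positive edge and is stable in $\Sigma^+$; and $X \subseteq S$ is stable in $\Sigma$ (hence in $\Sigma^+$) because $S$ is. This exhibits $\Sigma^+$ as bipartite with both parts stable in $\Sigma^+$ and the part $X$ stable in $\Sigma$, so Proposition~\ref{bipartite and stable} yields $\M(\Sigma)=1$.

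The main thing to be careful about, rather than a deep obstacle, is the bookkeeping around the vertex set: $V(\Sigma^+)$ consists only of vertices incident to a positive edge, whereas a vertex cover of $E^+$ and a set ``stable in $\Sigma$'' are defined on all of $V$. The cleanest route is simply to intersect the stable vertex cover with $V(\Sigma^+)$ and check that the two stability conditions survive the restriction, which they do because stability is inherited by subsets. I expect no genuine difficulty beyond verifying that ``covers $E^+$'' and ``the complementary part is stable in $\Sigma^+$'' are two phrasings of the same condition, so the proof should be short once Proposition~\ref{bipartite and stable} is invoked.
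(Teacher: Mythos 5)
Your proof is correct and matches the paper's intent exactly: the paper gives no separate proof of this lemma, presenting it as a rephrasing of Proposition~\ref{bipartite and stable}, and your argument is precisely that translation carried out carefully (one part of the bipartition is a stable cover of $E^+$, and conversely a stable cover intersected with $V(\Sigma^+)$ yields the required bipartition). Both directions check out, including the bookkeeping about $V(\Sigma^+)$ versus $V$.
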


\subsection{MaxDef}
Given a 3-chromatic, not necessarily simple, signed graph, $\Sigma,$ we provide an algorithm that decides whether the maximum deficiency of $\Sigma$ is 1 or 0. We call the algorithm MaxDef. We provide a worked example of MaxDef in section \ref{ss4.2.3}. 

The input for MaxDef is a 3-chromatic signed graph. We assume the graph is input as a pair of adjacency lists; one array of lists contains a list of the positive adjacencies for each vertex, and the other array of lists does the same for the negative adjacencies. Note that there must be at least one positive edge because the graph is 3-chromatic. The output of MaxDef is the maximum deficiency of the graph, either a 0 or a 1, along with a stable vertex cover of $E^+$ if the maximum deficiency is 1.

We build a partial stable cover throughout this process and end either in the creation of a stable cover of $E^+$ or in the non-existence of such a cover. In order to produce a stable cover, if one exists, we store recovery information about the identification of vertices. We use the symbol $\rightarrow$ to indicate replacement. For example, $A \cup B \rightarrow A$ means replace object $A$ with object $A \cup B.$ We use $N_-(v)$ to indicate the set of vertices of $\Sigma$ that are negatively adjacent to $v.$ Let $S$ be the partial stable cover, starting with $S = \emptyset.$ Let $B$ be a list of vertices that are forbidden from being in $S,$ starting with $B = \emptyset.$

{\bf Step 1:} Delete all vertices of $\Sigma$ not in $V^+.$ Every vertex incident to only negative edges does not affect the existence of a stable cover of $E^+$, and therefore, by Lemma \ref{vertex cover}, the existence of a deficiency-1 coloration. Call this set of vertices $A.$ Then let $\Sigma_{01} = \Sigma \setminus A.$

{\bf Step 2:} Let $H_1, H_2, \ldots, H_r$ be the connected components of $\Sigma^+_{01}$. For each connected component of $\Sigma^+_{01},$ check to see if it is bipartite. 

If $H_i$ is not bipartite, then $\M(\Sigma_{01}) = 0$ and thus, $\M(\Sigma) = 0$ and the algorithm ends here.

If $H_i$ is bipartite, let its unique vertex bipartition have parts $A_i$ and $B_i$. Start to create a new graph from $\Sigma_{01}$ by collapsing $A_i$ to a vertex $a_i$ and each $B_i$ to a vertex $b_i$ and eliminating multiple edges of the same sign. This makes $H_i$ into a positive edge with endpoints $a_i$ and $b_i.$ If there exists a negative edge $a_ib_i$, delete it.

If all $H_i$ are bipartite, then at the end of Step 2 we have a flattened graph, call it $\Sigma_{02}$, composed of a positive perfect matching with all other edges negative. For a matched pair $(a_i, b_i)$ we use $x_i$ to represent one vertex of the pair and $\bar x_i$ to represent the other.

\begin{lem} 
\label{1-1}
There is a one-to-one correspondence between stable covers of $E^+$ and stable covers of $E^+_{02}.$
\end{lem}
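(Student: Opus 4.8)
The plan is to show that the collapsing operations performed in Steps 1 and 2 preserve, in a reversible way, the existence and structure of stable covers of the positive edge set. Since Step 1 only deletes vertices of $A$ that are incident to no positive edge, no positive edge has an endpoint in $A$, so a set $S$ is a vertex cover of $E^+$ that is stable in $\Sigma$ if and only if $S \subseteq V^+$ is a vertex cover of $E^+_{01}$ that is stable in $\Sigma_{01}$; thus it suffices to establish the correspondence between stable covers of $E^+_{01}$ and stable covers of $E^+_{02}$. First I would fix the central observation about the collapsing in Step 2: within each bipartite component $H_i$ with parts $A_i, B_i$, every positive edge runs between $A_i$ and $B_i$, so any vertex cover of $E^+$ restricted to $H_i$ must include every vertex of $A_i$ or include every vertex of $B_i$ that is an endpoint of a covered edge. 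The key claim I would prove is that a \emph{stable} cover of $E^+$ cannot mix the two parts of a component: if $S$ contains a vertex of $A_i$ and a vertex of $B_i$, connectedness of $H_i$ forces (by following a positive path) two chosen vertices that are positively adjacent, contradicting stability in $\Sigma$. Hence within each component a stable cover must consist of \emph{exactly} one whole part, either all of $A_i$ or all of $B_i$.

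With that claim in hand, the correspondence becomes natural. Given a stable cover $S$ of $E^+_{01}$, for each component $H_i$ record which part $S$ uses; define the image cover $S'$ of $E^+_{02}$ by selecting the collapsed vertex $a_i$ if $S$ used all of $A_i$, and $b_i$ if $S$ used all of $B_i$. I would check that $S'$ covers the positive matching edge $a_ib_i$ (it does, since it contains one endpoint) and that $S'$ is stable in $\Sigma_{02}$: a negative edge of $\Sigma_{02}$ between two chosen collapsed vertices would arise from a negative edge of $\Sigma_{01}$ between two chosen original vertices, contradicting the stability of $S$. Conversely, given a stable cover $S'$ of $E^+_{02}$, expand each chosen collapsed vertex back to the full part it represents; this yields a set $S$ that covers every positive edge of $H_i$ (all of $A_i$ or all of $B_i$ covers the complete bipartite-embedded positive structure) and whose stability in $\Sigma_{01}$ I would verify by tracing each negative edge of $\Sigma_{01}$ to the corresponding negative edge of $\Sigma_{02}$ that survived the multiple-edge elimination.

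The main obstacle, and the step I would treat most carefully, is verifying that stability is preserved in \emph{both} directions under the identification, because collapsing can both merge and delete edges. In the forward direction the subtlety is that two originally non-adjacent vertices in the same part $A_i$ become a single vertex, so internal edges are irrelevant, but negative edges leaving the part must be tracked; here I would use that collapsing eliminates parallel edges of the same sign and deletes any negative edge $a_ib_i$ parallel to the positive matching edge, so no spurious negative adjacency is introduced between the two parts. In the backward direction the care is needed because a single surviving negative edge of $\Sigma_{02}$ may represent many negative edges of $\Sigma_{01}$; I must ensure that choosing whole parts does not create a negative adjacency inside $S$ that was hidden by the flattening. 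I would argue that if expanding $S'$ produced a negative edge with both endpoints in $S$, that edge would have collapsed to a negative edge between two chosen collapsed vertices of $\Sigma_{02}$, contradicting the stability of $S'$. Establishing this two-way edge-tracking precisely is the crux; once it is settled, the map $S \mapsto S'$ and its inverse are manifestly well-defined and mutually inverse, giving the claimed bijection.
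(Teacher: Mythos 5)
Your proposal is correct and follows essentially the same route as the paper's (much terser) proof: the heart of both is the observation that a stable cover of $E^+$ must use \emph{all} of exactly one part $A_i$ or $B_i$ in each component $H_i$, which corresponds to choosing exactly one of $a_i, b_i$, with the deleted negative $a_ib_i$ edges being harmless since both endpoints of a matched pair are never chosen together. You merely supply the alternating-path justification and the edge-tracking details that the paper leaves implicit.
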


\begin{proof}
For each $i$, every stable cover of $E^+$ uses \textit{all} the vertices of either $A_i$ or $B_i.$ This corresponds to having exactly one vertex of the pair $(a_i, b_i)$ in every stable cover of $E^+_{02}$. In addition, the removal of any negative $x_i \bar x_i$ edge does not affect the 0/1 outcome of MaxDef.
\end{proof}

Steps 1 and 2 are executed only once during the algorithm. The remainder of the steps are part of a recursive process and may be executed multiple times. Let $\Sigma_1 = \Sigma_{02}.$

{\bf Step 3:} Look for an $i$ such that $x_i$ and $\bar{x}_i$ are both in $B.$ If such an $i$ exists, then $\M(\Sigma) = 0.$ If no such $i$ exists, then continue to Step 4.

{\bf Step 4:} Look for an $i$ such that $x_i$ is in $B.$ If such an $i$ exists, then $S \cup \{\bar x_i\} \rightarrow S,$ $(B \cup N_-(\bar x_i)) \setminus \{x_i\} \rightarrow B,$ and $\Sigma_1 \setminus \{x_i, \bar x_i\} \rightarrow \Sigma_1.$ Return to Step 3. If no such $i$ exists, then continue to Step 5.

{\bf Step 5:} Look for an $i$ such that $x_i$ and $\bar x_i$ both have loops. If such an $i$ exists, then $\M(\Sigma) = 0.$ If no such $i$ exists, continue to Step 6.

{\bf Step 6:} Look for an $i$ such that $x_i$ has a loop. If such an $i$ exists, then $S \cup \{\bar x_i \} \rightarrow S,$ $B \cup N_-(\bar x_i) \rightarrow B$, and $\Sigma_1 \setminus \{x_i, \bar x_i\} \rightarrow \Sigma_1.$ Return to Step 3. If no such $i$ exists, continue to Step 7.

{\bf Step 7:} Look for an $i$ such that $x_i$ is adjacent to both $x_j$ and $\bar x_j$. If such an $i$ exists, then $S \cup \{\bar x_i\} \rightarrow S,$ $B \cup N_-(\bar x_i) \rightarrow B,$ and $\Sigma_1 \setminus \{x_i, \bar x_i\} \rightarrow \Sigma_1.$ Return to Step 3. If no such $i$ exists, then continue to Step 8.

{\bf Step 8:} Look for an $i$ and $j,$ $i \neq j,$ such that $x_i$ is adjacent to $x_j$ and $\bar x_i$ is adjacent to $\bar x_j$. If such an $i$ and $j$ exist, then identify vertex $x_i$ to $\bar x_j$, and vertex $\bar x_i$ to $x_j.$ We are identifying vertices that must always be in a stable cover together. So $\Sigma_1$ (with the above stated identifications) $\rightarrow \Sigma_1.$ Delete negative edge $x_i \bar x_i$, and return to Step 3. If no such $i$ and $j$ exist, then continue to Step 9.

{\bf Step 9:} Look for a vertex of degree one. If there exists such a vertex, call it $x_i,$ then $S \cup \{x_i\} \rightarrow S,$ $B \cup N_-(x_i) \rightarrow B,$ and $\Sigma_1 \setminus \{x_i, \bar x_i\} \rightarrow \Sigma_1.$ If both $x_i$ and $\bar x_i$ are vertices of degree 1, then we only place one into $S$; it doesn't matter which one. Return to Step 3. If no such vertex exists, then continue to Step 10.

Step 9 is the only step where the vertex we add to $S$ is not a forced addition. We show that we can add it to $S$ without affecting the 0/1 outcome of MaxDef.

\begin{lem} Let $x_i$ be a vertex of degree one not in $B$. Then there exists a stable cover of $E_1^+$ extending $S$ if and only if there exists a stable cover extending $S$ which contains $x_i.$
\end{lem}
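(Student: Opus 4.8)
The plan is to prove the nontrivial direction by an exchange argument, swapping the matched partner $\bar x_i$ out of a given stable cover in favour of $x_i$. The reverse implication is immediate: a stable cover of $E_1^+$ extending $S$ that happens to contain $x_i$ is in particular a stable cover extending $S$, so I would dispose of it in a single sentence. All the content is in the forward direction.

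First I would record the structural facts maintained since Step 2. In $\Sigma_1$ the positive edges form a perfect matching, every surviving vertex lies in a matched pair $(x_i,\bar x_i)$ joined by a positive edge, and every other edge or loop is negative. Since no step of the algorithm ever deletes a \emph{positive} matching edge, the degree-one hypothesis forces the unique edge at $x_i$ to be the positive matching edge $x_i\bar x_i$; in particular $x_i$ has no negative neighbour and no loop. I would also note that, because the current recursion has passed Steps 3 and 4, no vertex of $\Sigma_1$ lies in $B$, so both $x_i$ and $\bar x_i$ are permissible cover vertices (this subsumes the hypothesis $x_i\notin B$).

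Now, given any stable cover $T$ of $E_1^+$ extending $S$, if $x_i\in T$ there is nothing to prove. Otherwise, since the positive edge $x_i\bar x_i$ must be covered and $x_i\notin T$, the partner $\bar x_i$ lies in $T$. I would set $T' = (T\setminus\{\bar x_i\})\cup\{x_i\}$ and check it is again a stable cover extending $S$, now containing $x_i$. For covering: because the positive edges form a matching, the only positive edge incident with $\bar x_i$ is $x_i\bar x_i$, and this edge is covered by $x_i\in T'$, while every other positive edge is covered by $T\setminus\{\bar x_i\}\subseteq T'$. For stability: $T\setminus\{\bar x_i\}$ inherits stability from $T$, and adjoining $x_i$ creates no induced edge since $x_i$ has neither a negative neighbour nor a loop; as $x_i\notin B$, no conflict with $S$ is introduced either. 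Hence $T'$ is the desired cover.

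The one point that requires care is verifying that the single exchange preserves covering and stability simultaneously, and this is exactly where the two special features are used: the perfect-matching structure of $E_1^+$ guarantees that replacing $\bar x_i$ by $x_i$ uncovers no positive edge, and degree one guarantees that $x_i$ is negatively isolated and loopless so that adding it cannot break stability. Accordingly, before performing the swap I would flag explicitly that these two facts follow from the Step 2 invariant together with the observation that positive matching edges are never deleted; everything after that is the routine verification above.
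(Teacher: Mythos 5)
Your proof is correct and follows essentially the same route as the paper's: the reverse direction is trivial, and the forward direction is the exchange argument replacing $\bar x_i$ by $x_i$ in a given stable cover, using the perfect-matching structure of $E_1^+$ to preserve covering and the facts that $x_i$ has degree one (hence no negative neighbours or loops) and $x_i \notin B$ to preserve stability. The only cosmetic difference is that the paper routes the swap through the intermediate set $S_1' = S_1 \setminus \{\bar x_i\}$ viewed in $\Sigma_1 \setminus \{x_i, \bar x_i\}$, whereas you perform the exchange directly in $\Sigma_1$.
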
 

\begin{proof} Suppose $\Sigma_1$ has a vertex of degree one that is not in $B$. Call this vertex $x_i.$ Recall that $E_1^+$ is a perfect matching. Suppose $S_1$ is a stable cover of $E_1^+$ that extends $S.$ If $S_1$ contains $x_i$, then the lemma holds. So suppose $S_1$ does not contain $x_i;$ then $S_1$ must contain $\bar x_i.$ Thus, $S_1$ is a stable cover of $E_1^+$ if and only if $S_1' = S_1 \setminus \{\bar x_1\}$ is a stable cover of $E^+$ in $\Sigma_1 \setminus \{x_i, \bar x_i\}.$ Because $x_i$ has degree one and is not in $B$, it has no neighbors in $S_1'.$ Therefore, $S_1'$ is stable in $\Sigma_1 \setminus \{x_i, \bar x_i\}$ if and only if $S_2 = S_1' \cup \{x_i\}$ is stable in $\Sigma_1.$ Moreover, since $x_i$ covers edge $x_i \bar x_i,$ $S_1'$ is a cover of $E^+$ in $\Sigma_1 \setminus \{x_i, \bar x_i\}$ if and only if $S_2$ is a cover of $E_1^+$.
\end{proof}

{\bf Step 10:} If $\Sigma_1$ is the empty graph, then $\M(\Sigma) = 1.$ Use the recovery information to produce a stable cover of $E^+$. If $\Sigma_1$ is not the empty graph, then continue to Step 11.

Upon reaching Step 11, $\Sigma_1$ is simple, is composed of a positive perfect matching with all other edges negative, contains no loops and no vertices of degree 1, and has at most one edge between every two matched pairs $(a_i, b_i)$ and $(a_j, b_j).$ 

{\bf Step 11:} Create the \emph{forcing graph} of $\Sigma_1$, called $F(\Sigma_1)$. We define $F(\Sigma_1)$ as a directed graph with $V = V_1$ and $E = \{x_ix_j \mid x_i\bar x_j \in E_1^-\}.$ Every edge in $F(\Sigma_1)$ represents a forced decision about which vertices of $\Sigma_1$ must appear together in every stable cover of $E^+_1.$ For example, if $x_ix_j \in E$, then $x_i\bar x_j \in E_1^-$. This means that if $x_i$ is in a stable cover of $E^+_1$, then $\bar x_j$ cannot be, and so $x_j$ must also be in the stable cover.

Observe that $x_ix_j \in E$ if and only if $\bar x_j\bar x_i \in E.$ Also, $\delta_+(F(\Sigma_1)) \geq 1,$ because there are no vertices of degree one in $\Sigma_1.$ Thus, there must exist a cycle in $F(\Sigma_1).$ By our first observation, there must in fact be two cycles, possibly not disjoint; if one cycle uses vertices $x_1, \ldots, x_r,$ then the other must use vertices $\bar x_1, \ldots, \bar x_r.$ Choosing a single vertex for the stable cover from a cycle forces the rest of the vertices in the cycle to also be chosen.

{\bf Step 12:} If the cycles found in Step 11 are not disjoint, then $\M(\Sigma) = 0.$ If they are, identify the vertices of $\Sigma_1$ that belong to each of the cycles from the pair found in Step 11. Eliminate multiple edges of the same sign and delete any negative $x_i \bar x_i$ edge. Then $\Sigma_1$ (with the above identifications) $\rightarrow \Sigma_1$ and return to Step 3.

This ends the MaxDef algorithm.

\begin{thm}
The MaxDef algorithm correctly decides the maximum deficiency of $\Sigma$ and is a finite process.
\end{thm}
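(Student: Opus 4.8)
The plan is to prove the two assertions separately: correctness rests on a single invariant preserved by every step, and finiteness on a monovariant that strictly decreases on each pass through the recursive block. To set up the invariant, I would first use Lemma \ref{vertex cover} and Lemma \ref{1-1} to reduce the original question "$\M(\Sigma)=1$?" to the question "does $\Sigma_{02}$ admit a stable cover of $E^+_{02}$?"; since $E^+_{02}$ is a perfect matching, such a cover is exactly a choice of one vertex from each matched pair with no two chosen vertices negatively adjacent. The invariant I would then carry through the recursion is: at every stage, $\M(\Sigma)=1$ if and only if the current $\Sigma_1$ admits a set $C\subseteq V_1$ that meets each matched pair in exactly one vertex, is disjoint from $B$, and is stable in $\Sigma_1$; moreover any such $C$, lifted through the recorded identifications and unioned with $S$, is a stable cover of $E^+_{02}$. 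The base case is the state just after Step 2, where $S=B=\emptyset$ and $\Sigma_1=\Sigma_{02}$.

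Next I would verify the invariant step by step, using three facts throughout: that $E^+_1$ is always a perfect matching, that stability together with covering forces exactly one vertex of each pair into $C$, and that a negative edge $uv$ forbids $u$ and $v$ from lying in $C$ simultaneously. The "declare $0$" steps (3, 5, and the non-disjoint branch of 12) are handled by exhibiting a pair both of whose members are barred from $C$: in Step 3 both lie in $B$; in Step 5 both carry a negative loop, so neither can sit in a stable set; in Step 12 two mirror cycles sharing a vertex force some pair to contribute both members. The forced-addition steps (4, 6, 7) each show that one member of a pair cannot belong to $C$—because it is forbidden, carries a loop, or is adjacent to both members of another pair—so its partner is obligatory; committing that partner to $S$, forbidding its negative neighbours (the update to $B$), and deleting the pair yields an equivalent residual problem, where disjointness from $B$ guarantees that $S$ and the lift of $C$ have no negative edge between them. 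Step 9 is the sole non-forced choice and is justified verbatim by the degree-one lemma already proved. The identification steps (8 and the disjoint branch of 12) require showing that the merged vertices must occur together in every admissible $C$: in Step 8 the two edges yield $x_i\in C \Leftrightarrow \bar x_j\in C$ and $\bar x_i\in C \Leftrightarrow x_j\in C$, and in Step 12 a forcing cycle propagates membership of one vertex to all others in the cycle; merging such vertices therefore puts the residual problem in bijection with its predecessor. Finally, Step 10 reads off that an empty $\Sigma_1$ has the admissible set $C=\emptyset$, returning $\M(\Sigma)=1$.

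The crux is the forcing-graph argument of Steps 11 and 12, and I expect the careful accounting of inter-pair edges to be the main obstacle. I would first record that after Steps 7 and 8 there is at most one edge between any two matched pairs: among the four possible inter-pair negative edges, Step 7 eliminates any pair of them sharing a single vertex, and Step 8—read up to the free choice of which member of each pair is labelled $x_i$—eliminates the two remaining "parallel" configurations, leaving at most one. This is precisely the hypothesis ruling out $2$-cycles in $F(\Sigma_1)$, since a $2$-cycle would demand two distinct edges between one pair and another (a same-pair $2$-cycle would demand a loop). Combined with the stated observations that $F(\Sigma_1)$ is symmetric under the bar involution and that $\delta^+(F(\Sigma_1))\ge 1$ (because Step 9 has removed all degree-one vertices, so every vertex carries a negative edge and hence an outgoing arc), Lemma \ref{digraph cycles} supplies a cycle, of length at least $2$, together with its mirror. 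Reading the arcs as forcing relations shows every vertex of such a cycle must be chosen together, which both legitimises the identification in the disjoint branch and, in the non-disjoint branch, exhibits a pair forced to contribute both members, so $\M(\Sigma)=0$.

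For finiteness I would argue with a monovariant. Steps 1 and 2 run once. On the recursive block, Steps 3, 5, 10, and the non-disjoint branch of 12 halt the algorithm outright; each of Steps 4, 6, 7, and 9 deletes an entire matched pair; and each of Step 8 and the disjoint branch of 12 strictly decreases $|V_1|$, because identifying a cycle of length at least $2$ (indeed at least $3$, by the no-$2$-cycle analysis, though $2$ suffices) merges at least two vertices into one. Since the steps are tried in order and at least one applies whenever $\Sigma_1$ is nonempty, every pass through Steps 3--12 either terminates or strictly decreases the nonnegative integer $|V_1|$; hence the process reaches an empty graph or a "declare $0$" step after finitely many passes. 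Correctness of the value returned in each terminal case then follows from the invariant, completing the proof.
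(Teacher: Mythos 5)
Your proposal is correct and follows essentially the same route as the paper: the same reduction via Lemmas \ref{vertex cover} and \ref{1-1}, the same step-by-step justifications (forced choices in Steps 4, 6, 7, the degree-one lemma for Step 9, the mirror-cycle argument for Steps 11--12), and the same termination argument that each pass through the recursive block removes at least one matched pair. The only difference is presentational: you package the correctness argument as an explicitly maintained invariant on $(\Sigma_1, S, B)$ rather than the paper's two-directional case analysis on the terminal steps, which is a somewhat tidier formalization of the identical underlying reasoning.
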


\begin{proof}
One possible way for MaxDef to end is with $\M(\Sigma) = 0$ in Steps 2, 3, 5, and 12. The other possibility is to end with $\M(\Sigma) =1$ in Step 10. If the graph does not satisfy the ending conditions of Steps 2, 3, 5, or 12, at least one of the following Steps must be completed: 4, 6, 7, 8, 9, 10, or 12. In fact, Step 12 is not reached unless the graph does not satisfy the conditions of Steps 3--10. In the completion of Steps 4, 6, 7, 8, 9, or 12, the graph is reduced by at least one matched pair. Thus, since our graph is finite, if it never satisfies the ending conditions of Steps 2, 3, 5, or 12, it must eventually be an empty graph and satisfy the ending condition of Step 10.

Now we show that MaxDef correctly decides the maximum deficiency. We first prove if MaxDef returns 1, then $\M(\Sigma) = 1.$ 

Suppose MaxDef returns 1. Then MaxDef ends with Step 10. Let $S$ be the set of vertices produced, and let $S'$ be $S$ restricted to $\Sigma_{02}$. Observe that $S'$ is stable in $\Sigma_{02}.$ Indeed, any vertex added to the partial stable cover (through Steps 4, 6, 8, and 9) was guaranteed by Steps 3 and 4 to be allowed in the partial stable cover. Furthermore, any vertices that were identified through Steps 8 and 12 were a stable set, as guaranteed by Step 7 and the definition of the forcing graph. Also observe that $S'$ is a cover of $E^+_{02}$ since exactly one of every matched pair is present in $S'$ by definition of the algorithm. By Lemma \ref{1-1}, if $S'$ is a stable cover of $E^+_{02}$, then $S$ is a stable cover of $E^+_{01}$. Thus, $S$ is a stable cover of $E^+$ since $E^+ = E^+_{01},$ and $\Sigma_{01}$ is an induced subgraph of $\Sigma.$ Therefore, by Lemma \ref{vertex cover} $\M(\Sigma) = 1.$

Suppose MaxDef returns 0. Then the algorithm ended with Step 2, 3, 5, or 12.

Suppose MaxDef ended with Step 2. Then there exists a connected component of $\Sigma_{01}^+$ that is not bipartite. Thus, it is also a connected component of $\Sigma^+$ that is not bipartite. By Lemma \ref{bipartite and stable}, $\M(\Sigma) = 0.$

Suppose MaxDef ended with Step 3. Then at some point in the algorithm, there exists a matched pair of vertices that were both in $B$. Since all additions to the partial stable set are either forced or of degree one---and therefore cannot be a neighbor of either vertices in the matched pair---this means a stable cover of $E^+_1$ does not exist. Thus, a stable cover of $E^+_{01}$, and therefore of $E^+,$ does not exist.

Suppose MaxDef ended with Step 5. Then after completing either Step 2 or Step 12, 
there exists a matched pair of vertices that both have loops. Supposed the matched pair is $(a_1, b_1).$ Loops in $\Sigma_1$ directly after Step 2 come from either pre-existing loops, or internal edges in $A_1$ and $B_1$. Loops in $\Sigma_1$ after completing Step 12 come from sets of vertices with internal negative edges being identified. In either case, choosing either $a_1$ or $b_1$ results in choosing an unstable set of vertices for the stable cover.

Finally, suppose MaxDef ended with Step 12. Then there existed a pair of cycles in the forcing graph that were not disjoint. Thus, choosing a single vertex from either cycle forces the rest of the vertices in both cycles to be chosen. Therefore, a matched pair, $x_i$ and $\bar x_i$, must both be chosen and it is impossible to create a stable cover.
\end{proof}

\subsection{Example of MaxDef}
\label{ss4.2.3}
Let the following graph be our original $\Sigma.$ We start with $S = \emptyset,$ $B = \emptyset,$ and a recovery array $R = \emptyset.$

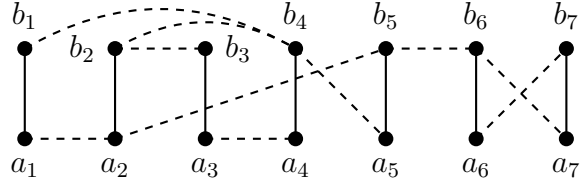
\begin{figure}[H]
\centering
\begin{tikzpicture}[
thick,
       acteur/.style={
         circle,
         fill=black,
         thick,
         inner sep=2pt,
         minimum size=0.2cm
       }, scale=.6
    ] 
\tikzset{every loop/.style={min distance=12mm,looseness=12}}
\node at (0,0) [acteur, label = below: $a_1$]{};
\node at (0,2) [acteur, label = above: $b_1$]{};

\node at (2,0) [acteur, label = below: $a_2$]{};
\node at (2,2) [acteur, label = left: $b_2$]{};

\node at (4,0) [acteur, label = below: $a_3$]{};
\node at (4,2) [acteur, label = right: $b_3$]{};

\node at (6,0) [acteur, label = below: $a_4$]{};
\node at (6,2) [acteur, label = above: $b_4$]{};

\node at (8,0) [acteur, label = below: $a_5$]{};
\node at (8,2) [acteur, label = above: $b_5$]{};

\node at (10,0) [acteur, label = below: $a_6$]{};
\node at (10,2) [acteur, label = above: $b_6$]{};

\node at (12,0) [acteur, label = below: $a_7$]{};
\node at (12,2) [acteur, label = above: $b_7$]{};

\draw[thick] (0,0) -- (0,2); 
\draw[thick] (2,0) -- (2,2); 
\draw[thick] (4,0) -- (4,2); 
\draw[thick] (6,0) -- (6,2); 
\draw[thick] (8,0) -- (8,2); 
\draw[thick] (10,0) -- (10,2); 
\draw[thick] (12,0) -- (12,2); 

\draw[thick, dashed] (0,0) -- (2,0);
\draw[thick, dashed] (0,2) to[bend left] (6,2);
\draw[thick, dashed] (2,0) -- (8,2);
\draw[thick, dashed] (2,2) -- (4,2);
\draw[thick, dashed] (2,2) to[bend left] (6,2);
\draw[thick, dashed] (4,0) -- (6,0);
\draw[thick, dashed] (6,2) -- (8,0);
\draw[thick, dashed] (8,2) -- (10,2);
\draw[thick, dashed] (10,0) -- (12,2);
\draw[thick, dashed] (10,2) -- (12,0);

\end{tikzpicture}
\caption[$\Sigma$ is the starting graph for our example]{$\Sigma$ is a 3-chromatic graph.}
\label{original}
\end{figure}

Because $\Sigma$ is composed of a positive perfect matching with all other edges negative we skip Steps 1 and 2. At this point $\Sigma_1 = \Sigma$, $S = \emptyset, B = \emptyset,$ and $R$ has an empty list for each vertex.

{\bf Step 8:} Using Step 8 we can identify vertices $b_6$ and $b_{7}$, calling this vertex $b_6$ and vertices $a_6$ and $a_{7},$ calling this vertex $a_6.$ We update $R$ to get the table below.

{\centering
\begin{tabular}{| c | c | c | c | c | c | c | c | c | c | c | c | c | c |}
\hline
$a_1$ & $b_1$ & $a_2$ & $b_2$ & $a_3$ & $b_3$ & $a_4$ & $b_4$ & $a_5$ & $b_5$ &
$a_6$ & $b_6$ \\ \hline
$\{\}$ & $\{\}$ & $\{\}$ & $\{\}$ & $\{\}$ & $\{\}$ & $\{\}$ & $\{\}$ & $\{\}$ & $\{\}$ & $\{a_7\}$ & $\{b_7\}$ \\ \hline
\end{tabular} \par
}

\begin{figure}[H]
\centering
\begin{tikzpicture}[
thick,
       acteur/.style={
         circle,
         fill=black,
         thick,
         inner sep=2pt,
         minimum size=0.2cm
       }, scale=.6
    ] 
\tikzset{every loop/.style={min distance=12mm,looseness=12}}
\node at (0,0) [acteur, label = below: $a_1$]{};
\node at (0,2) [acteur, label = above: $b_1$]{};

\node at (2,0) [acteur, label = below: $a_2$]{};
\node at (2,2) [acteur, label = left: $b_2$]{};

\node at (4,0) [acteur, label = below: $a_3$]{};
\node at (4,2) [acteur, label = right: $b_3$]{};

\node at (6,0) [acteur, label = below: $a_4$]{};
\node at (6,2) [acteur, label = above: $b_4$]{};

\node at (8,0) [acteur, label = below: $a_5$]{};
\node at (8,2) [acteur, label = above: $b_5$]{};

\node at (10,0) [acteur, label = below: $a_6$]{};
\node at (10,2) [acteur, label = above: $b_6$]{};

\draw[thick] (0,0) -- (0,2); 
\draw[thick] (2,0) -- (2,2); 
\draw[thick] (4,0) -- (4,2); 
\draw[thick] (6,0) -- (6,2); 
\draw[thick] (8,0) -- (8,2); 
\draw[thick] (10,0) -- (10,2); 

\draw[thick, dashed] (0,0) -- (2,0);
\draw[thick, dashed] (0,2) to[bend left] (6,2);
\draw[thick, dashed] (2,0) -- (8,2);
\draw[thick, dashed] (2,2) -- (4,2);
\draw[thick, dashed] (2,2) to[bend left] (6,2);
\draw[thick, dashed] (4,0) -- (6,0);
\draw[thick, dashed] (6,2) -- (8,0);
\draw[thick, dashed] (8,2) -- (10,2);

\end{tikzpicture}
\caption[$\Sigma_1$ is the starting graph for our example]{$\Sigma_1$ after executing Step 8.}
\label{original}
\end{figure}
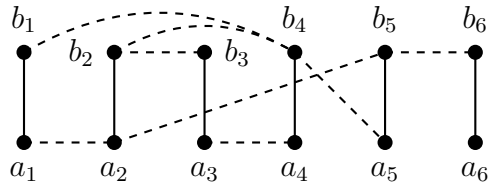

{\bf Step 9:} Vertex $a_6$ has degree one, so we update $S$ to be $S = \{a_6\}.$

\begin{figure}[H]
\centering
\begin{tikzpicture}[
thick,
       acteur/.style={
         circle,
         fill=black,
         thick,
         inner sep=2pt,
         minimum size=0.2cm
       }, scale=. 6
    ] 
\tikzset{every loop/.style={min distance=12mm,looseness=12}}
\node at (0,0) [acteur, label = below: $a_1$]{};
\node at (0,2) [acteur, label = above: $b_1$]{};

\node at (2,0) [acteur, label = below: $a_2$]{};
\node at (2,2) [acteur, label = left: $b_2$]{};

\node at (4,0) [acteur, label = below: $a_3$]{};
\node at (4,2) [acteur, label = right: $b_3$]{};

\node at (6,0) [acteur, label = below: $a_4$]{};
\node at (6,2) [acteur, label = above: $b_4$]{};

\node at (8,0) [acteur, label = below: $a_5$]{};
\node at (8,2) [acteur, label = above: $b_5$]{};

\draw[thick] (0,0) -- (0,2); 
\draw[thick] (2,0) -- (2,2); 
\draw[thick] (4,0) -- (4,2); 
\draw[thick] (6,0) -- (6,2); 
\draw[thick] (8,0) -- (8,2); 

\draw[thick, dashed] (0,0) -- (2,0);
\draw[thick, dashed] (0,2) to[bend left] (6,2);
\draw[thick, dashed] (2,0) -- (8,2);
\draw[thick, dashed] (2,2) -- (4,2);
\draw[thick, dashed] (2,2) to[bend left] (6,2);
\draw[thick, dashed] (4,0) -- (6,0);
\draw[thick, dashed] (6,2) -- (8,0);

\end{tikzpicture}
\caption[$\Sigma_1$ is the starting graph for our example]{$\Sigma_1$ after executing Step 9.}
\label{original}
\end{figure}
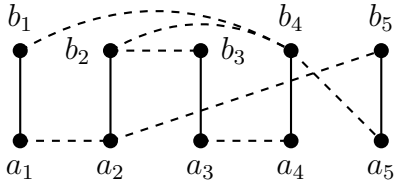

{\bf Step 11:} Consider the forcing graph of $\Sigma_1.$

\begin{figure}[H]
\centering
\begin{tikzpicture}[
thick,
       acteur/.style={
         circle,
         fill=black,
         thick,
         inner sep=2pt,
         minimum size=0.2cm
       }, scale=.6
    ] 
\tikzset{every loop/.style={min distance=12mm,looseness=12}}
\tikzset{edger/.style={decoration={
  markings,
  mark=at position #1 with {\arrow{angle 45}}},postaction={decorate}, color = red}}\tikzset{edgeb/.style={decoration={
  markings,
  mark=at position #1 with {\arrow{angle 45}}},postaction={decorate}, color = blue}}
\tikzset{edge/.style={decoration={
  markings,
  mark=at position #1 with {\arrow{angle 45}}},postaction={decorate}}}
\node at (0,0) [acteur, label = right: $a_1$]{};
\node at (0,2) [acteur, label = left: $b_1$]{};

\node at (2,0) [acteur, label = below: $a_2$]{};
\node at (2,2) [acteur, label = left: $b_2$]{};

\node at (4,0) [acteur, label = right: $a_3$]{};
\node at (4,2) [acteur, label = right: $b_3$]{};

\node at (6,0) [acteur, label = above: $a_4$]{};
\node at (6,2) [acteur, label = below: $b_4$]{};

\node at (8,0) [acteur, label = below: $a_5$]{};
\node at (8,2) [acteur, label = above: $b_5$]{};

\draw [edge=.8] (0,0) to (2,2);
\draw [edge=.8] (2,0) to (0,2);
\draw [edge=.7] (6,2) to[bend right = 90, looseness = 1.3] (0,0);
\draw [edge=.7] (0,2) to[bend right = 90, looseness = 1.3] (6,0);
\draw [edge=.85] (2,2) to (4,0);
\draw [edge=.8] (4,2) to (2,0);
\draw [edge=.5] (2,0) to[bend right] (8,0);
\draw [edge=.5] (8,2) to[bend right] (2,2);
\draw [edge=.8] (2,2) to (6,0);
\draw [edge=.45] (6,2) to (2,0);
\draw [edge=.7] (4,0) to (6,2);
\draw [edge=.45] (6,0) to (4,2);
\draw [edge=.6] (6,2) to (8,2);
\draw [edge=.6] (8,0) to (6,0);
\end{tikzpicture}
\caption[The forcing graph of $\Sigma_1$]{The forcing graph of $\Sigma_1$.}
\label{forcing graph 1}
\end{figure}

{\bf Step 12:} Identify the vertices, $a_1, b_2, a_3,$ and $b_4,$ and call this vertex $a_1.$ Identify the vertices, $a_4, b_3, a_2,$ and $b_1,$ and call this vertex $b_1.$ Our recovery array is updated to be the following:

{\centering
\begin{tabular}{| c | c | c | c | c | c |}
\hline
$a_1$ & $b_1$ & $a_5$ & $b_5$ & $a_6$ & $b_6$ \\ \hline
$\{b_2,a_3,b_4\}$ & $\{a_2,b_3,a_4\}$ & $\{\}$ & $\{\}$ & $\{a_7\}$ & $\{b_7\}$ \\ \hline
\end{tabular} \par}

\begin{figure}[H]
\centering
\begin{tikzpicture}[
thick,
       acteur/.style={
         circle,
         fill=black,
         thick,
         inner sep=2pt,
         minimum size=0.2cm
       }, scale=.6
    ] 
\tikzset{every loop/.style={min distance=12mm,looseness=12}}
\node at (6,0) [acteur, label = below: $a_1.$]{};
\node at (6,2) [acteur, label = above: $b_1$]{};

\node at (8,0) [acteur, label = below: $a_5$]{};
\node at (8,2) [acteur, label = above: $b_5$]{};

\draw[thick] (6,0) -- (6,2); 
\draw[thick] (8,0) -- (8,2); 

\draw[thick, dashed] (6,0) to[in= 180, out =220, loop] (6,0);
\draw[thick, dashed] (6,0) -- (8,0);
\draw[thick, dashed] (6,2) -- (8,2);

\end{tikzpicture}
\caption[After identifying the cycles of the forcing graph]{$\Sigma_1$ is the result of identifying the vertices in each of the two cycles.}
\label{sigma 1}
\end{figure}
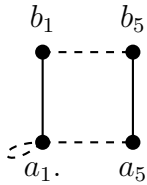

{\bf Step 6:} Vertex $b_1$ must be in every stable cover since its matched partner has a loop. Now $S = \{a_6, b_1\}$ and $B = \{b_5\}.$

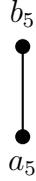
\begin{figure}[H]
\centering
\begin{tikzpicture}[
thick,
       acteur/.style={
         circle,
         fill=black,
         thick,
         inner sep=2pt,
         minimum size=0.2cm
       }, scale=.6
    ] 
\tikzset{every loop/.style={min distance=12mm,looseness=12}}
\node at (8,0) [acteur, label = below: $a_5$]{};
\node at (8,2) [acteur, label = above: $b_5$]{};

\draw[thick] (8,0) -- (8,2); 

\end{tikzpicture}
\caption[Step 6 example]{$\Sigma_1$ after removing vertices $a_1$ and $b_1$ with Step 6.}
\label{sigma 1.1}
\end{figure}

{\bf Step 4:} Vertex $b_5$ is in $B$. Thus, by Step 4, vertex $a_5$ must be in $S.$ So $S = \{a_6, b_1, a_5\}$ and $B = \emptyset.$

{\bf Step 10:} Our graph has no more vertices. Thus, $\M(\Sigma) = 1$. If we use $R$ to expand the vertices of $S$ into their original pre-identified vertices we get $S = \{a_6, b_1, a_5\} = \{a_6, a_7, a_4, b_3, a_2, b_1, a_5\}.$ Figure \ref{stable cover} shows the resulting stable cover of $E^+$.

\begin{figure}[H]
\centering
\begin{tikzpicture}[
thick,
       acteur/.style={
         circle,
         fill=black,
         thick,
         inner sep=2pt,
         minimum size=0.2cm
       }, 
       sq/.style={
         rectangle,
         fill=black,
         thick,
         inner sep=2pt,
         minimum size=0.2cm
       }, scale=.6
    ] 
\tikzset{every loop/.style={min distance=12mm,looseness=12}}
\node at (0,0) [acteur, label = below: $a_1$]{};
\node at (0,2) [sq, label = above: $b_1$]{};

\node at (2,0) [sq, label = below: $a_2$]{};
\node at (2,2) [acteur, label = left: $b_2$]{};

\node at (4,0) [acteur, label = below: $a_3$]{};
\node at (4,2) [sq, label = right: $b_3$]{};

\node at (6,0) [sq, label = below: $a_4$]{};
\node at (6,2) [acteur, label = above: $b_4$]{};

\node at (8,0) [sq, label = below: $a_5$]{};
\node at (8,2) [acteur, label = above: $b_5$]{};

\node at (10,0) [sq, label = below: $a_6$]{};
\node at (10,2) [acteur, label = above: $b_6$]{};

\node at (12,0) [sq, label = below: $a_7$]{};
\node at (12,2) [acteur, label = above: $b_7$]{};

\draw[thick] (0,0) -- (0,2); 
\draw[thick] (2,0) -- (2,2); 
\draw[thick] (4,0) -- (4,2); 
\draw[thick] (6,0) -- (6,2); 
\draw[thick] (8,0) -- (8,2); 
\draw[thick] (10,0) -- (10,2); 
\draw[thick] (12,0) -- (12,2); 

\draw[thick, dashed] (0,0) -- (2,0);
\draw[thick, dashed] (0,2) to[bend left] (6,2);
\draw[thick, dashed] (2,0) -- (8,2);
\draw[thick, dashed] (2,2) -- (4,2);
\draw[thick, dashed] (2,2) to[bend left] (6,2);
\draw[thick, dashed] (4,0) -- (6,0);
\draw[thick, dashed] (6,2) -- (8,0);
\draw[thick, dashed] (8,2) -- (10,2);
\draw[thick, dashed] (10,0) -- (12,2);
\draw[thick, dashed] (10,2) -- (12,0);

\end{tikzpicture}
\caption[The resulting stable cover]{The square vertices of $\Sigma$ make up the stable cover $\{b_1, a_2, b_3, a_4, a_5, a_6, a_7\}$ of $E^+$.}
\label{stable cover}
\end{figure}
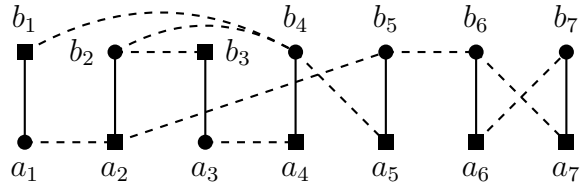

\section{Complexity}

\subsection{Data Format}
Our algorithm requires five different objects to be stored. First, we store the signed graph using adjacency lists. The adjacency lists will be two arrays of lists, one for positive edges and one for negative edges. Since each edge of $\Sigma$ appears twice in these arrays, the size of these adjacency lists is at its largest $2E.$ After Step 2, we no longer need the positive adjacency lists and stop updating them.

Second, we need to store the current forcing graph adjacency lists. Every negative edge in our signed graph gives rise to two directed edges. Therefore the size of these adjacency lists is at its largest $2E^-.$

Third, we need to store the partial stable set $S$. This is stored simply as a list of size at most $\frac{1}{2}V.$

Fourth, we need a list of vertices forbidden from the partial stable set $S.$ This is stored simply as a list of size at most $V-1$.

Finally, we store recovery information in order to produce a stable cover of the original graph if the maximum deficiency is 1. The recovery information is stored as an array of lists, one list for each group of identified vertices, and has size at most $V$.

\subsection{Time Complexity}

\begin{thm}
\label{time complexity}
If $\Sigma$ is without multiple edges of the same sign, then MaxDef has running time $\mathcal O(V^5).$
\end{thm}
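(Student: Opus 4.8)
The plan is to split the cost into the one-time preprocessing (Steps 1 and 2) and the recursive block (Steps 3--12), to bound the number of times the block is entered, to bound the cost of a single pass through it, and then to multiply. The hypothesis that $\Sigma$ has no repeated edges of the same sign is what converts edge counts into powers of $V$: it forces $E = \mathcal O(V^2)$ and every adjacency list to have length $\mathcal O(V)$, and I would first check that this invariant is preserved throughout, since every step that could create parallel same-sign edges (the collapses in Step 2, the identifications in Steps 8 and 12) explicitly deletes duplicate same-sign edges. Thus $E^- = \mathcal O(V^2)$ across all passes. Steps 1 and 2 run exactly once: deleting the vertices outside $V^+$ is a single scan, $\mathcal O(V+E)$, and finding the components of $\Sigma_{01}^+$ with a breadth-first search, testing each for bipartiteness, collapsing the parts $A_i,B_i$, and deleting multiple and negative $a_ib_i$ edges is again $\mathcal O(V+E)=\mathcal O(V^2)$, which is negligible against the final bound.

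Next I would bound the number of passes through Steps 3--12. By the correctness argument, every completion of Step 4, 6, 7, 8, 9, or 12 removes at least one matched pair from $\Sigma_1$, while Steps 3, 5, and 10 either terminate the algorithm or pass immediately to a reducing step, and Step 11 only feeds Step 12. Since $\Sigma_{02}$ has at most $V/2$ matched pairs, the block is entered $\mathcal O(V)$ times. It then remains to bound one pass. The pure tests are cheap: Step 3 (a matched pair both in $B$), Step 5 (a matched pair both carrying loops), Step 9 (a degree-one vertex), and Step 10 (emptiness) are each $\mathcal O(V)$ from the stored degree and membership data, and the label-driven Steps 4 and 6, together with the updates to $S$, $B$, and $\Sigma_1$ over a scanned neighborhood $N_-(\bar x_i)$, cost $\mathcal O(V)$. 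The pairwise searches cost more: in Step 7, for each of the $\mathcal O(V)$ matched pairs $i$ one scans $N_-(x_i)$ and tests, for each neighbor, whether its matched partner is also present, and in Step 8 one tests, for each of the $\mathcal O(V^2)$ pairs $(i,j)$, whether $x_i\sim x_j$ and $\bar x_i\sim\bar x_j$; carrying out the adjacency tests by list scans gives $\mathcal O(V^3)$ per pass for each. The forcing graph of Step 11 has $\mathcal O(E^-)=\mathcal O(V^2)$ arcs and is built in that time, and since $\delta^+(F(\Sigma_1))\ge 1$ a cycle is found by following out-arcs until a vertex repeats (Lemma \ref{digraph cycles}), in $\mathcal O(V)$, with its mirror cycle read off for free.

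The hard part will be the vertex identifications of Steps 8 and 12, which I expect to be the dominant cost. When a cycle (respectively a pair of matched edges) is collapsed to a single vertex, its new adjacency list is the concatenation of the lists of the identified vertices, of total length up to $\mathcal O(E^-)=\mathcal O(V^2)$, and the instruction to eliminate multiple edges of the same sign forces a deduplication of this merged list together with the corresponding replacement in the list of every affected neighbor. Bounding one identification crudely by pairwise comparison of the merged list gives $\mathcal O((V^2)^2)=\mathcal O(V^4)$, which subsumes the $\mathcal O(V^3)$ searches and makes a single pass $\mathcal O(V^4)$. The two delicate points to verify are (i) that repeated identifications never inflate $E^-$ beyond $\mathcal O(V^2)$, which holds precisely because duplicate same-sign edges are removed at each collapse, so the per-pass bound is uniform across all passes, and (ii) that maintaining the recovery array $R$ during each identification adds only $\mathcal O(V)$ per pass and is therefore negligible. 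Combining the one-time $\mathcal O(V^2)$ preprocessing with $\mathcal O(V)$ passes each of cost $\mathcal O(V^4)$ yields the total running time $\mathcal O(V^5)$.
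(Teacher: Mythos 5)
Your proposal is correct and follows essentially the same route as the paper: one-time preprocessing, an $\mathcal O(V)$ bound on the number of passes through the recursive block because each completed reduction step removes at least one matched pair, a per-pass cost dominated by the deduplication of merged adjacency lists during the identifications of Steps 8 and 12, and the no-multiple-same-sign-edges hypothesis converting $E^-$ into $\mathcal O(V^2)$ to reach $\mathcal O(V^5)$. The only divergence is cosmetic: you charge Step 2 only $\mathcal O(V+E)$ (which requires a linear-time deduplication you do not spell out), whereas the paper charges it $\mathcal O(V(E^-)^2)$; either accounting stays within the final bound.
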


\subsubsection{Updating Lists} 
Suppose vertex $x_i$ is added to the partial stable set. Then the sets $S$ and $B$ and the adjacency lists of our current version of $\Sigma$ need to be updated. This happens in several steps of the algorithm, so we calculate the time complexity of updating these three objects separately. 

In order to update $S,$ we simply append element $x_i$ to $S.$ This takes constant time. 

We update the adjacency lists and $B$ together. We look through the negative adjacency lists for both $x_i$ and $\bar x_i.$ Looking through the lists takes time $\mathcal O(E^-).$  We delete every occurrence of $\bar x_i.$ We need to delete vertex $\bar x_i$ at most $V-1$ times so deleting $\bar x_i$ from the adjacency lists takes time $\mathcal O(V).$ We delete every occurrence of $x_i$ and add the vertices in whose lists $x_i$ appeared to $B$. Since we need to delete $x_i$ at most $V-1$ times, the time complexity of deleting and updating $B$ is $\mathcal O(V).$ 

Finally, we need to possibly delete vertex $\bar x_i$ from $B.$ This requires looking through $B$, which takes time $\mathcal O(V),$ and then removing $\bar x_i$ if it appears, which takes constant time. 

Therefore, the total time complexity for updating all three objects is $\mathcal O(V + E^-).$

\subsubsection{Steps 1 Through 12}
{\bf Step 1:} We find vertices that are not in $V^+$ by looking for empty lists in the positive edge adjacency lists. Finding these vertices takes time $\mathcal O(V).$ 

In order to delete vertices from $\Sigma,$ we first remove the lists associated with those vertices. Deleting a list takes constant time and we delete at most $V-2$ lists; thus, deleting the full lists takes time $\mathcal O(V).$ We then delete any appearance of the vertices themselves in an adjacency list. Since the vertices being deleted only have negative edges, and there are at worst $V-2$ vertices being deleted, the time complexity is $\mathcal O(VE^-).$ Therefore the total time complexity for Step 1 is $\mathcal O(VE^-).$

{\bf Step 2:} In order to decide whether a given $H_i$ is bipartite, we can employ a depth-first search (DFS) algorithm. The DFS algorithm assigns a color to a vertex that is different than the color of its parent in the depth-first search tree. For an $H_i$ with $s$ vertices and $t$ edges, this algorithm takes time $\mathcal O(s + t),$ \cite{Cormen}. Therefore, checking all of the $H_i$ gives us a total time complexity of $\mathcal O(V + E^+).$ 

To collapse an $H_i$, we use the bipartition $A_i,$ $B_i$ from the DFS algorithm. Suppose $A_i$ is composed of vertices $v_1, \ldots, v_r$ and $B_i$ is composed of vertices $v_{r+1}, \ldots, v_{r+s}.$ Then we create a list for $a_i$ and a list for $b_i$ by moving every element in the negative adjacency lists of vertices $v_1, \ldots, v_r$ to the negative adjacency list for $a_i$ and every element in the negative adjacency lists of vertices $v_{r+1}, \ldots, v_{r+s}$ to the negative adjacency list for $b_i.$ Moving an element takes constant time, and we do it at most $2E^-$ times. 

We delete the negative edge $a_ib_i,$ if it exists, and remove duplicate elements from the $a_i$ and $b_i$ lists. This requires looking though the lists at most $2E^-$ times, once for each element of the lists, for a total time complexity of $\mathcal O((E^-)^2).$ 

Finally, we initialize the recovery array with enough positions for each $a_i$ and $b_i$, then add the lists for two new elements, one for vertex $a_i$ and one for vertex $b_i.$ Because each $H_i$ results in two lists for the recovery array, we put the list of vertices from $A_i$ in position $2i-1$ and the list of vertices from $B_i$ in position $2i.$ This takes constant time.

Thus, collapsing one $H_i$ takes time $\mathcal O((E^-)^2).$ We must do this process at most $V/2$ times, as each $H_i$ has at least 2 vertices. Thus, the entire collapsing process takes time $\mathcal O(V(E^-)^2).$ Therefore, the total time complexity of Step 2 is $\mathcal O(V(E^-)^2 + V + E^+).$

{\bf Step 3:} We look through $B$ once for each element of $B$. Thus, Step 3 takes time $\mathcal O(V^2).$

{\bf Step 4:} We check to see if $B$ is empty; this takes constant time. If $B$ is not empty, we add a new vertex to $S$ and update the various lists. As shown above, updating the lists is of time complexity $\mathcal O(V + E^-).$ Therefore, Step 4 takes time $\mathcal O(V + E^-).$

{\bf Step 5:} We look through the negative adjacency lists to find a matched pair of lists that both contain their own vertex. This takes time $\mathcal O(E^-).$

{\bf Step 6:} We look through the negative adjacency lists to find a list that contains its own vertex. This has time complexity $\mathcal O(E^-).$ If we find such a vertex, we must update the lists; this takes time $\mathcal O(V + E^-).$ Therefore, the total time complexity of Step 6 is $\mathcal O(V + E^-).$

{\bf Step 7:} We look through each negative adjacency list to see if it contains an $x_i, \bar x_i$ pair. This takes time $\mathcal O(V^2)$. Since we must do this for each vertex, the total time complexity for checking the lists is $\mathcal O(V^3).$ Then we must update the various lists, taking time $\mathcal(V + E^-).$ Therefore, the total time complexity of Step 7 is $\mathcal O(V^3 + E^-).$

{\bf Step 8:} We look through the negative adjacency lists once for each element of the negative adjacency lists. Thus, the time complexity of finding such vertices is $\mathcal O((E^-)^2).$ 

If we find such vertices, we must identify them. Suppose we are identifying vertices $x_i$ and $\bar x_j$ and vertices $\bar x_i$ and $x_j.$ We describe the process for identifying $x_i$ and $\bar x_j.$ Without loss of generality, assume $i < j.$ 

First, we combine the negative adjacency lists for $x_i$ and $\bar x_j.$ Concatenating the two lists takes time at most $\mathcal O(V)$. We then need to delete edge $x_i \bar x_i$ and remove duplicate elements from the newly combined lists, which as shown in Step 2 of this proof takes time $\mathcal O((E^-)^2).$

Second, we change every occurrence of $\bar x_j$ in an adjacency list to $x_i.$ Changing the element takes constant time, and looking through the adjacency lists has time complexity $\mathcal O(E^-).$

Finally, we update the recovery information. This requires concatenating the $\bar x_j$ list to the $x_i$ list, and removing the $\bar x_j$ list. Concatenating takes time at most $\mathcal O(V)$, and deleting the $\bar x_i$ list takes constant time.

Therefore, identifying vertex $x_i$ to vertex $\bar x_j$ has total time complexity $\mathcal O((V + E^-)^2).$ We must do this process twice, once for each identification. Therefore, Step 8 has total time complexity of $\mathcal O(V + (E^-)^2).$

{\bf Step 9:} This step again requires looking through the negative adjacency lists and then updating. Therefore, the time complexity of Step 9 is $\mathcal O(V + E^-).$

{\bf Step 10:} This step requires looking through the negative adjacency lists, so takes time $\mathcal O(E^-).$ If the current version of $\Sigma$ is empty, we use the recovery information to produce a stable cover of the original graph. This requires looking through $S,$ and then for each element of $S$, looking through the recovery array. Since $S$ is size at most $\frac{1}{2}V$ and the recovery array is size at most $V,$ producing a stable cover takes time $\mathcal O(V^2).$ Therefore, Step 10 takes time $\mathcal O(V^2 + E^-).$

{\bf Step 11:} To create the forcing graph, we must look through the negative adjacency lists and create a new set of adjacency lists. For every endpoint $x_i$ in the adjacency list of vertex $v_1$ we add the vertex $\bar x_i$ to the adjacency list of $v_1$ in the forcing graph. The addition of a new element to the forcing graph adjacency lists takes constant time, and there are $2E^-$ elements of the original adjacency lists. Therefore, Step 11 takes time $\mathcal O(E^-).$

{\bf Step 12:} For this step, we first find a cycle---recall the cycles come in pairs, so we need only look for one cycle. This can be done with a DFS algorithm, so takes time $\mathcal O(V + E^-)$ \cite{Cormen}. Let the cycles found be $C_1$ and $C_2.$

We then check to see whether $C_1$ and $C_2$ are disjoint. This requires reading through the list of vertices in $C_2$, once for each vertex in $C_1.$ This takes time $\mathcal O(V^2).$

If the cycles are disjoint, we identify the vertices of each cycle. Let one cycle be made up of vertices $x_1, x_2, \ldots, x_r.$ We first take all the elements of the negative adjacency lists of vertices $x_2, \ldots, x_r$ and add them to the negative adjacency list for vertex $x_1.$ Each addition takes constant time, and there are at most $E^- - 2$ additions. Next, we delete the empty lists for vertices $x_2, \ldots, x_r.$ Each deletion again takes constant time, and there are at most $V-2$ deletions. We then look through all the negative adjacency lists to change every appearance of $x_2, \ldots, x_r$ to $x_1.$ Each change takes constant time, and we must look through $2E^-$ elements. Next we look through the adjacency lists and delete every repeated element in a list to get rid of multiple edges. We also delete the negative edge $x_1 \bar x_1$ if it exists. The deletions take constant time, and we must look through $2E^-$ elements. Finally, we concatenate the lists for $x_2, \ldots, x_r$ to the recovery list for $x_1.$ Each addition takes constant time and there are at most $V-1$ lists to append. Therefore, identifying the vertices of one cycle takes time $\mathcal O(V + E^-).$ We do the identification process twice, after completing the depth first search; thus, the time complexity of Step 12 is $\mathcal O(V^2 + E^-).$

\subsubsection{Proof of Theorem \ref{time complexity}}
\begin{proof}
Let $\Sigma$ be a 3-chromatic graph. The algorithm completes Steps 1 and 2 exactly once each. This gives a time complexity of $\mathcal O(V(E^-)^2 + V + E^+).$

Every pass through Steps 3--9 consists of Step 3, at worst checks on all the conditions of Steps 4--9, and the completion of at most one step. Step 3 takes time $\mathcal O(V^2).$ Checking for vertices that satisfy the conditions of Steps 4--9 is bounded by $\mathcal O((E^-)^2).$ The completion of a single step from the Steps 4--9 is bounded by $\mathcal O(V + (E^-)^2).$ Each time the algorithm returns to Step 3, the graph has been reduced by at least one matched pair of vertices. Thus, the algorithm must pass through Steps 3--9 at most $V/2$ times. Therefore, Steps 3--9 are of total time complexity $\mathcal O(V^3 + V(E^-)^2).$

Steps 10, 11, and 12 are repeated at most $V/6$ times. This happens if every pair of cycles the algorithm finds in the forcing graph are composed of 3 vertices each. The cycles cannot be smaller than 3 vertices since Step 7 removes all possible cycles of length 2, and there are no loops in our forcing graphs. Thus, Steps 10, 11, and 12 have total time complexity $\mathcal O(V^3 + VE^-).$

Therefore, the algorithm takes time 
$$\mathcal O(V^3 + V(E^-)^2 + V + E^+) = \mathcal O(V^3 + VE^2 + V + E) = \mathcal O(V^5). \qedhere$$
\end{proof}

\begin{corollary}
MaxDef is a polynomial-time algorithm if the signed graph is without multiple edges of the same sign and is input as a vertex list and two edge lists.
\end{corollary}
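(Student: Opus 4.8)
The plan is to reduce to Theorem \ref{time complexity}, which already establishes an $\mathcal{O}(V^5)$ bound for MaxDef when the input is presented as a pair of adjacency lists. Since the corollary hypothesizes a different input format---a vertex list together with a positive edge list and a negative edge list---the only additional work is to convert this format into the adjacency-list format assumed by the algorithm, and to verify that this conversion costs no more than the $\mathcal{O}(V^5)$ already spent by the main computation.

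First I would describe the conversion explicitly. Initialize two arrays of empty lists indexed by the vertex list, one array for positive adjacencies and one for negative adjacencies; this takes time $\mathcal{O}(V)$. Then, for each edge $\{u,v\}$ in the positive edge list, append $v$ to the positive adjacency list of $u$ and $u$ to the positive adjacency list of $v$, and proceed analogously for each edge in the negative edge list. Each edge contributes exactly two append operations, each of constant cost, so this phase runs in time $\mathcal{O}(E^+ + E^-) = \mathcal{O}(E)$. Hence the entire conversion takes time $\mathcal{O}(V + E)$.

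Next I would invoke the hypothesis that $\Sigma$ has no multiple edges of the same sign, which bounds $E^+ \le \binom{V}{2}$ and $E^- \le \binom{V}{2}$, so that $E = \mathcal{O}(V^2)$ and the conversion cost $\mathcal{O}(V + E)$ is $\mathcal{O}(V^2)$. This is dominated by the $\mathcal{O}(V^5)$ running time of MaxDef from Theorem \ref{time complexity}, so prepending the conversion leaves the total running time at $\mathcal{O}(V^5)$. Finally I would observe that the input size in the edge-list format is $\Theta(V + E)$, whence a running time of $\mathcal{O}(V^5)$---polynomial in $V$ and therefore in the input size---shows MaxDef is polynomial-time under the stated representation.

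There is no substantive obstacle here: the content lives entirely in Theorem \ref{time complexity}, and the corollary merely records that changing the input representation adds only a negligible preprocessing step. The single point requiring a little care is confirming that the vertices can be used directly as array indices, so that each append is genuinely constant-time; if the vertex labels were arbitrary, I would first relabel them to $1, \dots, V$ in $\mathcal{O}(V)$ time, which does not affect the conclusion.
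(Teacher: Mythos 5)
Your proposal is correct and follows essentially the same route as the paper: the paper's proof simply observes that the input has size $V + E$ and invokes Theorem \ref{time complexity}, so that $\mathcal O(V^5)$ is polynomial in the input size. Your additional detail about explicitly converting the edge lists to adjacency lists in $\mathcal O(V+E)$ time is a reasonable elaboration of a step the paper leaves implicit, but it does not change the substance of the argument.
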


\begin{proof}
The vertex list is of size $V$ and the two edge lists are of size $E^+ + E^- = E.$ Thus, by Theorem \ref{time complexity}, the time for MaxDef is bounded by a polynomial in the size of our input.
\end{proof}

\begin{rmk}
A more compact way to input the graph is to input the number of vertices and an edge list. This input is size $E + 1$. If the graph is input in this more compact way, the MaxDef bound of $\mathcal O(V^5)$ is not necessarily polynomial. For example, if $E$ was of size $o(V)$, $\mathcal O(V^5)$ could be exponential or worse. Fortunately, if $\Sigma$ is connected, then $V = \mathcal O(E)$, meaning MaxDef is still a polynomial-time algorithm in the more compact input format.
\end{rmk}

\section{Switching Deficiency}
Let $A$ be a set of vertices of $\Sigma.$ \emph{Switching} $A$ is negating the signs of the edges with exactly one endpoint in $A$. Two signed graphs are \emph{switching equivalent} if they are related by switching. The chromatic number of a signed graph is the same as the chromatic number of the switched graph. A minimal coloration of the switched graph is simply a byproduct of switching the graph; given a minimal coloration of $\Sigma$, the signs of the colors on $A$ are negated during the switching process. Two colorations, $\kappa$ and $\kappa^*,$ are \emph{switching equivalent} if they are related by switching. If $\kappa^*$ is switching equivalent to $\kappa$, then $\kappa^*$ colors $\Sigma^*.$

In general, allowing switching makes the questions surrounding deficiency straightforward to solve. The \emph{switching deficiency range of $\kappa$}, $\Ls(\kappa)$, is $\{\defi(\kappa^*) \mid \kappa^* $ is switching equivalent to $\kappa \}.$ The \emph{switching deficiency range of $\Sigma$}, $\Ls(\Sigma)$, is $\{\defi(\kappa^*) \mid \kappa^* $ is a minimal coloration of a graph that is switching equivalent to $\Sigma.\}.$

\begin{thm}
Let $\Sigma$ be a signed graph and let $\kappa$ be a minimal coloration of $\Sigma.$ Then $\Ls(\kappa) = \Ls(\Sigma) = \left[0, \floor{\frac{1}{2}\chi(\Sigma)}\right].$
\label{switching def main}
\end{thm}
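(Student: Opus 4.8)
The plan is to sandwich all three sets inside $\left[0,\floor{\tfrac{1}{2}\chi(\Sigma)}\right]$, using that the inclusion $\Ls(\kappa)\subseteq\Ls(\Sigma)$ holds by definition (every switch of $\kappa$ is a minimal coloration of a graph switching-equivalent to $\Sigma$). Thus it suffices to prove the upper bound $\Ls(\Sigma)\subseteq\left[0,\floor{\tfrac12\chi}\right]$ and the reverse containment $\left[0,\floor{\tfrac12\chi}\right]\subseteq\Ls(\kappa)$. First I would record the two facts that make switching tractable. Writing $\chi=2k$ or $\chi=2k+1$ (so that $k=\floor{\tfrac12\chi}$ in either parity), the palette splits into the $k$ pairs $\{c,-c\}$, together with the color $0$ when $\chi$ is odd. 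Switching a set $A$ negates the signs of the edges across $A$ and negates the colors on $A$; in the constraint $\kappa(a)\ne\sigma(e)\kappa(b)$ the two sign changes cancel, so a switch of a minimal coloration is again a minimal coloration of the switched graph. Since $-0=0$, switching fixes every $0$-colored vertex and merely moves the other vertices within their own pairs; hence the number of vertices colored from a given pair is a switching invariant, and $\defi$ of any switch equals the number of \emph{half-used} pairs — those using exactly one of $c,-c$ — because $0$, when present, is always used.

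For the upper bound I would argue that no pair is empty in a minimal coloration: the used colors of a coloration with an empty pair form a standard palette of size $\chi-2$ that still properly colors the graph, contradicting the minimality of $\chi$. Consequently every pair supplies at least one used color, and together with the always-used $0$ in the odd case this gives at least $\lceil\tfrac12\chi\rceil$ used colors, so $\defi\le\floor{\tfrac12\chi}$. As this reasoning applies to every minimal coloration of every switching-equivalent graph, $\Ls(\Sigma)\subseteq\left[0,\floor{\tfrac12\chi}\right]$.

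For achievability I would reduce everything to one lemma: \emph{in a minimal coloration each pair contains at least two vertices.} Granting this, fix a target $d\in\{0,1,\dots,k\}$. Because the color classes are pairwise disjoint, a switching set $A$ may be chosen to act independently on each pair; for a pair with at least two vertices I can either drive all its vertices onto a single color (making it half-used) or split them across both colors (making it full). Choosing exactly $d$ pairs to be half-used and the rest full produces a switch of $\kappa$ of deficiency $d$, so $[0,k]=\left[0,\floor{\tfrac12\chi}\right]\subseteq\Ls(\kappa)$. With the previous paragraph and $\Ls(\kappa)\subseteq\Ls(\Sigma)$ this forces all three sets to coincide.

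The main obstacle is precisely the lemma that every pair has at least two vertices, since pair sizes are switching-invariant and a single lone vertex in a pair would pin the deficiency permanently above $0$. I would prove it by showing a singleton pair contradicts minimality. If the pair $\{c,-c\}$ has its one vertex $v$ colored $c$, a short check (no other vertex is colored $c$, and none is colored $-c$) shows the forbidden set $F(v)=\{\sigma(vw)\kappa(w):w\sim v\}$ contains neither $c$ nor $-c$. When $\chi$ is even there is no $0$, so no neighbor is $0$-colored and recoloring $v\mapsto 0$ stays proper, yielding the standard palette of size $\chi-1$ and contradicting minimality. The genuinely hard case is $\chi$ odd: if $F(v)$ omits some color of the reduced palette $\{0,\pm1,\dots\}\setminus\{c,-c\}$ I simply recolor $v$ there and shrink the palette, so the crux is when $F(v)$ equals the entire reduced palette and in particular forces a $0$-colored neighbor of $v$. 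Here I would exploit that the $0$-class is stable to recolor $v$ simultaneously with its $0$-colored neighbors by an alternating, Kempe-type recoloring, producing a proper coloration on the reduced palette and again contradicting the minimality of $\chi$. Nailing this last recoloring is where the real work lies; everything else is bookkeeping about pairs.
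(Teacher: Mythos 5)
Your overall architecture is sound and close to the paper's: the upper bound via ``no pair is entirely unused,'' the reduction of everything to the lemma that no color pair is carried by a single vertex, and the achievability of every $d\in\left[0,\floor{\tfrac12\chi}\right]$ by switching within pairs. (Your one-shot achievability argument --- choose which pairs to half-use --- is a slightly cleaner packaging than the paper's two-step version, which first reaches deficiency $0$ and then switches $r$ whole positive color classes; both are fine.) The genuine gap is exactly where you flag it: the odd-$\chi$ case of the key lemma is left as an unexecuted ``Kempe-type recoloring,'' and as sketched it points in the wrong direction. Recoloring $v$ to $0$ and then trying to repair the $0$-class locally runs into the problem that the $0$-colored neighbors of $v$ may themselves see every available color on \emph{their} neighbors, so there is no guarantee the alternating repair terminates in a proper coloration; you have not shown it does, and the theorem's whole difficulty lives in this case.

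The resolution the paper uses is to recolor in the opposite direction: leave $v$ alone and push the entire $0$-class into the nearly empty pair. Concretely, with $w$ the unique vertex colored $-i$ (your $v$ with $i=-c$) and $i$ unused, define $\kappa'$ to agree with $\kappa$ except that every $0$-colored vertex positively adjacent to $w$ gets color $i$ and every other $0$-colored vertex gets color $-i$. Properness is immediate because the $0$-class is stable (no conflicts among recolored vertices), no vertex other than $w$ carries a color of absolute value $|i|$ (so the only conflicts to check are against $w$), and the sign choice is made precisely so that $\kappa'(u)\neq\sigma(uw)\kappa'(w)$ for each recolored neighbor $u$ of $w$. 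The result is a proper coloration on the $2k$ colors $\{\pm1,\dots,\pm k\}$, contradicting $\chi=2k+1$. If you replace your Kempe sketch with this explicit reassignment, the rest of your proposal goes through.
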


\begin{proof}
Let $\Sigma$ be a signed graph and let $\kappa$ be a minimal coloration of $\Sigma$. 

First we show that $\Ms(\Sigma) = \floor{\frac{1}{2}\chi}.$ We may assume that $\D(\kappa)$ contains only negative colors. Define $\kappa'$ to be the coloration obtained by switching all vertices with color less than 0. Then $\defi(\kappa') = \floor{ \frac{1}{2}\chi}$ and thus, $\Ms(\Sigma) = \floor{\frac{1}{2}\chi}$.

Next we show that $\ms(\Sigma) = 0.$ 

Let $\Sigma$ be a signed graph and let $\kappa$ be a minimal coloration of $\Sigma.$ We prove that for each $i \in D(\kappa)$ there exist at least two vertices colored $-i.$

Suppose $\chi(\Sigma) = 2k$ for some non-negative integer $k$. Suppose there exists an $i \in \D(\kappa)$ such that no negative edge of $\Sigma$ has both endpoints colored $-i$. Then $\{v \mid \kappa(v) = -i\}$ is stable. Thus, recoloring all such vertices 0 results in a proper coloration. But this coloration would use $2k-1$ colors.

Now suppose $\chi(\Sigma) = 2k + 1$ for some non-negative integer $k$. First note that $0 \not \in \D(\kappa)$ and there must at least one vertex colored $-i$ for each $i \in \D(\kappa)$, otherwise $\chi$ would be even. Suppose there exists $i \in \D(\kappa)$ such that exactly one vertex of $\Sigma$ is colored $-i.$ Call this vertex $w.$ Let $\kappa'$ be the coloration defined by 
$$\kappa'(v) = \begin{cases}
i & \text{ if } \kappa(v) = 0 \text{ and } v \text{ is positively adjacent to } w, \\
-i & \text{ if } \kappa(v) = 0 \text{ and } v \text{ is negatively adjacent to } w, \\
-i & \text{ if } \kappa(v) = 0 \text{ and } v \text{ is not adjacent to } w, \\
\kappa(v) & \text{ otherwise.}
\end{cases}$$

The 0-color set of $\kappa$ is stable, so recoloring these vertices $i$ and $-i$ is proper. Also, every vertex that was recolored $i$ is a positive neighbor of $w$. Finally, every vertex that was recolored $-i$ is either negatively adjacent to $w$ or not adjacent to $w$. But then $\kappa'$ is a proper coloration using $2k$ colors.

Switch $\Sigma$ at exactly one of the vertices colored $-i$ for each $i \in D(\kappa),$
and call this new coloration $\kappa'$. Then every color in the color set is being used, so $\defi(\kappa') = 0$ and therefore, $\ms(\Sigma) = 0.$

Finally, we show that every switching deficiency between 0 and $\floor{\frac{1}{2}\chi}$ can be attained. We may assume $\defi(\kappa) = 0.$ Let $r \in [0, \Ms(\kappa)].$ Choose $r$ positive colors in the color set of $\kappa;$ call them $c_1, c_2, \ldots, c_r.$ Let $\kappa'$ be the coloration defined by switching the vertices colored $c_i$ for each $i \in [1, r].$ Then $\kappa'$ does not have any vertices colored $c_i$ for each $i \in [1,r].$ Thus $\defi(\kappa') = \defi(\kappa) + r = r.$ Since every integer in $\Ls(\kappa)$ is an achievable value of switching deficiency for every minimal $\kappa,$ every integer in $\Ls(\Sigma)$ is achievable as well.
\end{proof}

\end{document}